\documentclass[12pt]{amsart}

\usepackage{amsmath}
\usepackage{amssymb, amsfonts,amscd,verbatim}
\usepackage{hyperref}
\usepackage{mathtools}
\usepackage{graphicx}
\usepackage{cleveref}
\usepackage{enumitem}
\usepackage[all]{xy}
\usepackage{tikz}
\usetikzlibrary{matrix}

\usepackage{booktabs}

\textheight 8.5in
\textwidth 7.5in
\oddsidemargin -0.5in
\evensidemargin -0.5in
\topmargin 0in


\newtheorem{thm}{Theorem}[section]
\newtheorem{prop}[thm]{Proposition}
\newtheorem{cor}[thm]{Corollary}
\newtheorem{lemma}[thm]{Lemma}
\newtheorem{defn}[thm]{Definition}
\newtheorem{rem}[thm]{Remark}
\numberwithin{equation}{section}

\newtheorem{example}[thm]{Example}

\renewcommand{\k}{\ensuremath{\mathbf{k}}}
\newcommand{\xkn}{\ensuremath{X(\mathbf{k}^n)}}

\def\Z{{\mathbb Z}}
\def\N{{\mathbb N}}
\def\F{{\mathbb F}}

\DeclareMathOperator{\spn}{span}

\DeclareMathOperator{\rank}{rank}

\DeclareMathOperator{\Lin}{Lin}
\DeclareMathOperator{\im}{Im}

\begin{document}

\title{Mod $p$ Buchstaber invariant}

\author{Djordje Barali\'{c}, Ale\v s Vavpeti\v c, Aleksandar Vu\v{c}i\'{c}}
\address{ \scriptsize{Mathematical Institute SASA, Belgrade, Serbia }}
\email{djbaralic@mi.sanu.ac.rs}
\address{Faculty of Mathematics and Physics, University of Ljubljana, Slovenia and Institute of Mathematics, Physics and Mechanics, Ljubljana, Slovenia}
\email{ales.vavpetic@fmf.uni-lj.si}
\address{\scriptsize{University of Belgrade, Faculty of Mathematics, Belgrade, Serbia}}
\email{avucic@matf.bg.ac.rs}

\subjclass[2020]{Primary  	05E45, 55P15, Secondary 52B22.}





\maketitle

\begin{abstract}
We investigate the mod $p$ Buchstaber invariant of the skeleta of simplices, for a prime number $p$, and compare them for different values of $p$. For $p=2$, the invariant is the real Buchstaber invariant.  Our findings reveal that these values are generally distinct. Additionally, we determine or estimate the mod $p$ Buchstaber invariants of certain universal simplicial complexes $X(\F_p^n)$.
\end{abstract}

\section{Introduction}

Moment angle complexes appeared in the papers of Buchstaber and Panov \cite{BucPan98} and \cite{BucPan99} as cell decompositions of certain toric spaces in algebraic geometry, combinatorial topology, and symplectic geometry. They are closely related to several important constructions in mathematics. Among them are intersections of Hermitian quadrics \cite{10.1007/s11511-006-0008-2} from holomorphic dynamics, Hamiltonian toric manifolds \cite{Audin} and complements of arrangements of coordinate spaces in a complex space \cite{Longueville}. Recently, they have found applications in the theory of mechanical linkages or `arachnoid mechanisms' \cite{Izmestev_2000} and topological data analysis \cite{10.1093/bib/bbad046}.

Moment angle complexes, their quotients, and various generalizations are extensively studied in toric topology. Construction of moment angle complex $\mathcal{Z}_\mathcal{K}$ induces a functor from the category of simplicial complexes and simplicial maps to the category of spaces with toric actions and equivariant map. Determination of the free toral rank of $\mathcal{Z}_\mathcal{K}$ is a very subtle problem since it is a combinatorial characteristic of $\mathcal{K}$, known in the literature as the \textit{Buchstaber invariant} or the \textit{Buchstaber number}.  The problem turned out difficult and only partial results are obtained up to now. The exact value of the Buchstaber invariant for graphs was obtained by Ayzenberg in \cite[Theorem~1]{Ayze1}. Erokhovets found a criterion for simplicial complexes to have Buchstaber number equal to zero, one and two. In \cite{MR1897064} Buchstaber and Panov showed a lower bound for the Buchstaber invariants in terms of the chromatic number of a simplicial complex. A generalized chromatic number method for estimation of the Buchstaber invariant was developed in \cite{Ayze2} and a method based on minimal non-simplices in \cite{Erok} and \cite{Ayze}. Ayzenberg in \cite{Ayze} constructed two simplicial complexes whose Tor-algebras are isomorphic but have distinct Buchstaber invariants. More about the Buchstaber invariant of simple polytopes and simplicial complexes can be found in the survey \cite{MR3482595}.

In toric topology, real counterparts of standard torus actions are studied. Group $\F_2^m$ acts on the real moment angle complex $_\mathbb{R} \mathbb{Z}_\mathcal{K}$ where $m$ is the number of vertices of simplicial complex $\mathcal{K}$, but this action is not free unless $\mathcal{K}$ is simplex. Maximal rank of the subgroup of $\F_2^m$ that acts freely on $_\mathbb{R} \mathbb{Z}_\mathcal{K}$ is known as the \textit{real Buchstaber invariant of $\mathcal{K}$}. Buchstaber invariant of a simplicial complex is less or equal than its real Buchstaber number, and the equality case is closely related to the lifting problem in toric topology, posted by L\"{u}. The progress in the study of the real Buchstaber invariant was achieved by Fukukawa and Masuda, who considered the real Buchstaber number for the skeletons of simplex $\Delta^n_k$ in \cite{FukMas} and reformulated the problem into a linear programming problem. The Buchstaber invariants of the skeletons of simplex are of particular interest because they appear in the estimations of the Buchstaber invariants of a simplicial complex, see \cite{MR3482595}. Cho in \cite{10.2969/jmsj/06841695} completed the calculations of the real Buchstaber invariants of $\Delta^n_k$.

An alternative approach to the Buchstaber invariants is based on nondegenerate maps into so called \textit{universal simplicial complexes}. They were introduced by Davis and Januszkiewicz in \cite{DJ} as simplicial complexes whose maximal simplices are bases of the lattices of $\mathbb{Z}^n$ or $\F_2^n$. They are closely related to the fundamental geometric object known as Tits building studied in homological algebra, cohomology of groups, number theory, etc.  The importance of universal complexes in toric topology is in their role in generalizing toric varieties and the study of the Buchstaber invariants. Indeed, for a finite simplicial complex, its complex and real Buchstaber invariants are equal to the difference of the number of its vertices and the minimal $n$  such that there exists a nondegenerate simplicial map from the simplicial complex into the universal complexes of the bases of the lattices in $\mathbb{Z}^n$ and $\F_2^n$, respectively.  This approach, developed by Ayzenberg in \cite{Ayze2}, motivated Barali\'{c}, Grbi\'{c}, Vavpeti\v{c} and Vu\v{c}i\'{c} in \cite{BGVV} to define \textit{the mod $p$ Buchstaber invariant} based on the universal complex of bases of the lattice of  $\F_p^n$ for any prime number $p$.

Determination of the Buchstaber invariants of the universal complexes is a question of essential interest in toric topology. These values also appear in the bounds of the Buchstaber numbers of a simplicial complex $K$ and in the obstructions for the lifting problem. In \cite{Sun2017}, Sun proved that the complex Buchstaber number of the real 3-dimensional universal complex is 10, and Shen in \cite{QShen} showed that the complex Buchstaber number of the real 4-dimensional universal complex is no less than 24.

In the article, we study the mod $p$ Buchstaber invariant of the skeletons of a simplex and some relations between various mod $p$ Buchstaber invariants of a simplicial complex. In Section 2, for any prime $p$ we introduce the mod $p$ Buchstaber invariant of a simplicial complex based on the universal complex bases of the lattice of  $\F_p^n$. Estimations and calculations of the mod $p$ Buchstaber invariant of a skeleton of a simplex are studied in Section 3. In Section 4, we present another approach to the real Buchstaber invariant of a skeleton of a simplex based on embeddings into the real universal complexes. The mod $p$ Buchstaber invariant of the real 3-dimensional universal complex is calculated for any prime number $p$.

\section{Buchstaber invariant}

Let $\mathcal{K}$ be an $n$-dimensional simplicial complex with the set of vertices $[m]$. Let
us denote by $(\underline{X}, \underline{A})=\{(X_i,
A_i)\}^m_{i=1}$ a collection of topological pairs of CW-complexes.
The polyhedral $\mathcal{K}$ product is a topological space $\mathcal{Z}_\mathcal{K}
(\underline{X}, \underline{A})=\bigcup_{\sigma \in \mathcal{K}} D(\sigma)$
where
$$D(\sigma)=\prod_{i=1}^m Y_i, \qquad\mbox{and}\qquad
Y_i=
\begin{cases}
X_i &\text{if } i\in\sigma,\\
A_i &\text{if } i\not\in\sigma.
\end{cases}
$$
By definition we have $D(\emptyset)=A_1\times \dots\times A_m$.

In the case $(\underline{X}, \underline{A})=\{(D^2,
S^1)\}^m_{i=1}$, $\mathcal{Z}_\mathcal{K} (\underline{X}, \underline{A})$ is denoted by $\mathcal{Z}_K$ and it is called \textit{the moment angle complex} of $\mathcal{K}$ and in the case $(\underline{X}, \underline{A})=\{(D^1, S^0)\}^m_{i=1}$, $\mathcal{Z}_\mathcal{K} (\underline{X}, \underline{A})$ is denoted by $_{\mathbb{R}} \mathcal{Z}_\mathcal{K}$ and it is called \textit{the real moment angle complex} of $\mathcal{K}$.

In the study of the moment angle and the real moment angle complexes, a special significance have their geometrical realization induced from the following models $D^2=\left\{z\in \mathbb{C} | |z|\leq 1\right\}$, $D^1=\left\{t\in \mathbb{R} | |z|\leq 1\right\}$, $S^1=\left\{z\in \mathbb{C} | |z|= 1\right\}$,$S^0=\left\{-1, 1\right\}$. The torus $T^m$ acts naturally coordinatewisely on $\mathcal{Z}_\mathcal{K}$ while $_{\mathbb{R}} \mathcal{Z}_\mathcal{K}$ has the natural coordinatewise action of the real torus $\F_2^m$. The diagonal actions of $S^1$ and $S^0$ on  $\mathcal{Z}_\mathcal{K}$ and $_{\mathbb{R}} \mathcal{Z}_\mathcal{K}$ are free. It is of particular interest to find the maximal dimension of subgroups of $T^m$ and $\F_2^m$ that act freely on $\mathcal{Z}_\mathcal{K}$ and $_{\mathbb{R}} \mathcal{Z}_\mathcal{K}$, respectively.

\begin{defn}\label{bi} A \textit{complex Buchstaber invariant} $s(\mathcal{K})$ of $\mathcal{K}$ is a maximal dimension of a toric subgroup of $T^m$ acting freely on $\mathcal{Z}_\mathcal{K}$.

A \textit{real Buchstaber invariant} $s_{\mathbb{R}} (\mathcal{K})$ of $\mathcal{K}$ is a maximal rank of a subgroup of $\F_2^m$ acting freely on $_{\mathbb{R}} \mathcal{Z}_\mathcal{K}$.
\end{defn}

In their seminal paper \cite{DJ}, Davis and Januszkiewicz first realized the importance of simplicial complexes whose maximal simplices are bases of the lattices of $\mathbb{Z}^n$ or $\F_2^n$ in generalizing toric varieties. These complexes, called universal complexes, are closely related to  Tits building studied in homological algebra, cohomology of groups and number theory. For theory and applications of buildings, we refer the reader to monograph \cite{AbraBrown}.

Let $\k$ be the field $\mathbb{F}_p$
or the ring $\mathbb{Z}$.  A set $\{ v_1, \ldots , v_m \}$ of elements in $\k^n$ is called \emph{unimodular} if $\spn\{v_1,\ldots ,v_m\}$ is a direct summand of $\k^n$ of dimension $m$. Note that a subset of an unimodular set is itself unimodular.

\begin{defn}
 The universal simplicial complex $\xkn$ on unimodular vertices $v_i\in \k^n$ is a simplicial complex whose simplices are all unimodular subsets of $\k^n$.
\end{defn}

The simplicial complexes $X (\F_p^n)$ are $(n-1)$-dimensional matroids and have the homotopy type of the wedge of spheres. In toric topology, $X (\Z^n)$ and  $X (\F_2^n)$ play important role in  combinatorial description of $s(\mathcal{K})$ and $s_{\mathbb{R}} (\mathcal{K})$. Following~\cite[Section 2]{Ayze1}, an equivalent way to introduce the Buchstaber invariants is the following;

\begin{prop}\label{bin} A \textit{complex Buchstaber invariant}  of $\mathcal{K}$ is the integer $s(\mathcal{K})=m-r$, where $r$ is the least integer such that there is a nondegenerate simplicial map
\[
f\colon \mathcal{K} \rightarrow X
	(\mathbf{\mathbb{Z}}^r) .
	\] 	
A \textit{real Buchstaber invariant}  of $\mathcal{K}$ is the integer $s_2 (\mathcal{K})=m-r$, where $r$ is the least integer such that there is a nondegenerate simplicial map
	$$ f\colon \mathcal{K} \rightarrow X (\F_2^r) .
	$$
\end{prop}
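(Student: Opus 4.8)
The plan is to translate the freeness of the group action into a condition on isotropy subgroups, and then to encode that condition as the defining (unimodularity) condition for a simplicial map into a universal complex. I will carry out the complex case in detail; the real case is formally identical and in fact simpler, since over a field there is no distinction between a subgroup and a direct summand.

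First I would identify the isotropy subgroups of the coordinatewise $T^m$-action on $\mathcal{Z}_\mathcal{K}$. Using the model $(D^2,S^1)$, the $i$-th circle fixes the $i$-th coordinate of a point precisely when that coordinate is $0\in D^2$, which is allowed only when $i$ belongs to the relevant simplex. Hence the set of zero coordinates of any point forms a simplex $\tau\in\mathcal{K}$, and the isotropy subgroup at that point is exactly the coordinate subtorus $T_\tau=\{(t_1,\dots,t_m)\mid t_i=1\text{ for }i\notin\tau\}$. Therefore the isotropy subgroups are precisely the $T_\tau$, $\tau\in\mathcal{K}$, and a subtorus $H\subseteq T^m$ acts freely if and only if $H\cap T_\tau=\{1\}$ for every $\tau\in\mathcal{K}$; since $T_\tau\subseteq T_{\tau'}$ whenever $\tau\subseteq\tau'$, it suffices to check maximal simplices.

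Next I would set up the dictionary between free subtori and simplicial maps. A subtorus $H\subseteq T^m$ of dimension $k$ arises as the kernel of the quotient homomorphism $\pi\colon T^m\to T^m/H\cong T^r$, with $r=m-k$; on integer lattices $\pi$ induces a map $\Z^m\to\Z^r$, and I set $v_i=\pi_*(e_i)\in\Z^r$. The crucial point is the equivalence
\[
H\cap T_\sigma=\{1\}
\quad\Longleftrightarrow\quad
\pi|_{T_\sigma}\colon T_\sigma\to T^r\text{ is injective}
\quad\Longleftrightarrow\quad
\{v_i\mid i\in\sigma\}\text{ is unimodular in }\Z^r .
\]
Here the final equivalence is the subtle step: injectivity of the induced torus homomorphism forces the images $\{v_i\mid i\in\sigma\}$ not merely to be linearly independent but to span a rank-$|\sigma|$ direct summand of $\Z^r$, since any finite-index (torsion) phenomenon in the lattice map produces torsion in the kernel of $\pi|_{T_\sigma}$. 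Thus $H$ acts freely if and only if $i\mapsto v_i$ defines a nondegenerate simplicial map $f\colon\mathcal{K}\to X(\Z^r)$ (nondegeneracy is automatic, as unimodular sets consist of distinct, linearly independent vectors, and each vertex $\{i\}\in\mathcal{K}$ maps to a primitive vector).

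Finally I would assemble the two inequalities. Given a free subtorus of the maximal dimension $s(\mathcal{K})$, the dictionary yields a nondegenerate simplicial map into $X(\Z^{m-s(\mathcal{K})})$, so the minimal $r$ satisfies $r\le m-s(\mathcal{K})$. Conversely, starting from a nondegenerate simplicial map $f\colon\mathcal{K}\to X(\Z^r)$ with $r$ minimal, I would form $\pi\colon T^m\to T^r$ from the vectors $v_i$ and take $H=(\ker\pi)_0$; the displayed equivalence shows $H$ acts freely, and $\dim H=m-\rank[v_1\cdots v_m]$. To see this rank equals $r$, I would argue by minimality: if the $v_i$ spanned a rank-$r'$ lattice $L$ with $r'<r$, then saturating $L$ in $\Z^r$ and using that a direct summand of $\Z^r$ contained in $L$ is a direct summand of $L$, the same data would give a nondegenerate map into $X(\Z^{r'})$, contradicting minimality. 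Hence $\dim H=m-r$ and $s(\mathcal{K})\ge m-r$, giving $s(\mathcal{K})=m-r$. The real statement follows verbatim with $T^m$ replaced by $\F_2^m$, the model $(D^1,S^0)$ in place of $(D^2,S^1)$, and $\Z^r$ by $\F_2^r$; over the field $\F_2$ the torsion subtlety vanishes, as unimodular simply means linearly independent and every subspace is a direct summand. The main obstacle is precisely the $\Z$-linear algebra in the displayed equivalence — keeping careful track of the difference between linear independence over $\Q$ and spanning a direct summand over $\Z$, which is exactly what distinguishes a genuinely free action from one with finite isotropy.
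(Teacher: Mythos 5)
The paper offers no proof of this proposition: it is stated as a quotation of Ayzenberg's reformulation (\cite[Section 2]{Ayze1}, going back to Buchstaber--Panov), so there is no in-paper argument to compare yours against. Your reconstruction is correct and follows the standard route: the isotropy subgroups of the coordinatewise action on $\mathcal{Z}_\mathcal{K}$ are exactly the coordinate subtori $T_\tau$, $\tau\in\mathcal{K}$, so freeness of a subtorus $H$ is the condition $H\cap T_\sigma=\{1\}$ on (maximal) simplices, and passing to the lattice of the quotient torus $T^m/H\cong T^r$ translates this into unimodularity of $\{v_i\mid i\in\sigma\}$, i.e.\ into a nondegenerate simplicial map to $X(\Z^r)$. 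You correctly isolate the two points where the argument could go wrong and handle both: injectivity of a torus homomorphism $T_\sigma\to T^r$ forces the image lattice to be a direct summand and not merely of full rank (a finite-index image produces torsion in the kernel), which is precisely the unimodularity built into the definition of $X(\Z^r)$; and in the converse direction it is the minimality of $r$ that forces $\rank\bigl[v_1\cdots v_m\bigr]=r$, via the observation that a direct summand of $\Z^r$ contained in the saturation of the span of the $v_i$ is a direct summand of that saturation. The $\F_2$ case degenerates to linear algebra over a field exactly as you say.
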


Ayzenberg in \cite{Ayze1} used this fact to define the Buchstaber number of an increasing sequence of simplicial complexes $\{L^i\}$. Recall that a sequence of simplicial complexes $\{L^i\}$, $i\in \mathbb{N}$ is called increasing if for all $j<k$ there exists a nondegenerate simplicial map $f\colon L^j\rightarrow L^k$. The Bushstaber  $\{L^i\}$ number  of a finite simplicial complex $\mathcal{K}$ is  the integer $s_{\{L^i\}} (\mathcal{K})=m-r$, where $r$ is the least integer such that there is a nondegenerate simplicial map $$ f\colon \mathcal{K} \rightarrow L^r.$$

The next proposition from \cite{Ayze1} is one of the main tools in determining and estimating various Buchstaber invariants.

\begin{prop}\cite[Proposition 2]{Ayze1} \label{ay1} Let $\{L^i\}$ and $\{M^i\}$ be two increasing sequences of simplicial complexes such that for each $i$ there is a nondegenerate map $f_i\colon L^i\rightarrow M^i$. Then $$s_{\{L^i\}} (\mathcal{K})\leq s_{\{M^i\}} (\mathcal{K}).$$
\end{prop}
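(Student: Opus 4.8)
The plan is to unwind both Buchstaber numbers into the language of minimal targets and thereby reduce the asserted inequality to a pure statement about the existence of nondegenerate maps. Write $s_{\{L^i\}}(\mathcal{K}) = m - r_L$ and $s_{\{M^i\}}(\mathcal{K}) = m - r_M$, where $r_L$ (respectively $r_M$) is the least index admitting a nondegenerate simplicial map $\mathcal{K} \to L^{r_L}$ (respectively $\mathcal{K} \to M^{r_M}$); these minima exist by the standing hypothesis that the Buchstaber numbers are defined. Since $m$ is the fixed number of vertices of $\mathcal{K}$, the claimed inequality $s_{\{L^i\}}(\mathcal{K}) \le s_{\{M^i\}}(\mathcal{K})$ is equivalent to $r_M \le r_L$. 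So it suffices to produce a single nondegenerate map from $\mathcal{K}$ into $M^{r_L}$: by the minimality defining $r_M$, the existence of such a map forces $r_M \le r_L$.

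To build that map, I would fix a nondegenerate simplicial map $g\colon \mathcal{K} \to L^{r_L}$ realizing the minimum $r_L$, and compose it with the given nondegenerate map $f_{r_L}\colon L^{r_L} \to M^{r_L}$. The composite $f_{r_L}\circ g\colon \mathcal{K} \to M^{r_L}$ is the candidate, and it lands in $M^{r_L}$ precisely because the index $r_L$ is the same for both sequences at this stage — this is where the hypothesis that the maps $f_i$ are indexed compatibly is used.

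The only genuine point to verify is that $f_{r_L}\circ g$ is again nondegenerate, that is, that nondegeneracy of simplicial maps is closed under composition. Recall that a simplicial map is nondegenerate exactly when its restriction to the vertex set of every simplex is injective. If $\sigma$ is a simplex of $\mathcal{K}$, then $g$ carries the vertices of $\sigma$ injectively onto the vertices of a simplex $g(\sigma)$ of $L^{r_L}$, and $f_{r_L}$ in turn carries the vertices of $g(\sigma)$ injectively onto the vertices of a simplex of $M^{r_L}$; composing two injections on the finite vertex set of $\sigma$ yields an injection, so $f_{r_L}\circ g$ is nondegenerate on $\sigma$, and hence on $\mathcal{K}$. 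This closure-under-composition is the crux of the argument, and it is entirely elementary — no combinatorial subtlety beyond the definition is required. With nondegeneracy of the composite established, the minimality of $r_M$ gives $r_M \le r_L$, whence $m - r_L \le m - r_M$, which is exactly $s_{\{L^i\}}(\mathcal{K}) \le s_{\{M^i\}}(\mathcal{K})$.
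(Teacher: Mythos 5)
Your argument is correct, and it is the standard one: compose an optimal nondegenerate map $\mathcal{K}\to L^{r_L}$ with $f_{r_L}$, note that a composite of nondegenerate simplicial maps is nondegenerate, and invoke minimality of $r_M$. The paper cites this result from Ayzenberg without reproducing a proof, so there is nothing in-text to compare against, but your reasoning is exactly the intended one.
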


For each prime number $p$ the universal complexes $X(\F_p^n)$ form increasing sequence of simplicial complexes, since $X(\F_p^i)$ embeds in $X(\F_p^j)$  if $i<j$. The mod $p$ map from $\Z^n$ to $\F_p^n$ induces nondegenerate map from $X(\Z^n)$ to $X(\F_p^n)$. These two observations motivated the authors of \cite{BGVV}  to define and study the mod $p$ Buchstaber number.

\begin{defn} A {\normalfont mod $p$ Buchstaber invariant}  of $\mathcal{K}$ is the integer $s_p (\mathcal{K})=m-r$, where $r$ is the least integer such that there is a nondegenerate simplicial map
	$$ f\colon \mathcal{K} \rightarrow X (\F_p^r) .
	$$
\end{defn}

Based on \cite[Lemma~7.32]{MR1897064}, a topological description of $s_p (\mathcal{K})$ can be given as a maximal rank of subgroup of $\F_p^m$  acting freely on  the polyhedral product $\mathcal{Z}_\mathcal{K} (\mathrm{Cone}( \F_p), \F_p)\subset D^{2m}$, where $\F_p$ is considered as the set of $p$-th roots of unity. Indeed, the statement and the proof of \cite[Lemma~7.32]{MR1897064} are analogous for $\mathcal{Z}_\mathcal{K} (\mathrm{Cone}(\F_p), \F_p)$ and subgroups of $\F_p^m$. Unlike in the cases of $\F_2^n$ and $\mathbb{Z}$, $\mathcal{Z}_\mathcal{K} (\mathrm{Cone}(\F_p), \F_p)$ for $p>2$ fails to be a manifold when $\mathcal{K}$ is a triangulation of a sphere.

We have a criterion to recognize the simplices from $\xkn$.
To vectors $u_0,\ldots, u_{k}$ of $\F_p^n$ we associate the $n\times (k+1)$ matrix $U=\Big( u_0\cdots u_k\Big)$ whose columns contain given vectors $u_i$, $0\leq i\leq k$.

\begin{prop} \cite[Proposition 1.5]{Baralic2023}
\label{GCD}
A set $\{u_0,\ldots ,u_k \}$ of vertices from $X(\F_p^n)$ is a $k$-simplex in $X(\F_p^n)$ if and only if  $\rank (U)= k+1$.
\end{prop}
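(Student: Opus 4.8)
The plan is to reduce the defining unimodularity condition for simplices to a rank condition by exploiting the fact that $\k=\F_p$ is a field. By definition, $\{u_0,\ldots,u_k\}$ is a $k$-simplex of $X(\F_p^n)$ precisely when these $k+1$ vectors are distinct and the set is unimodular, that is, when $W:=\spn\{u_0,\ldots,u_k\}$ is a direct summand of $\F_p^n$ of dimension $k+1$. So the whole content is to show that, over $\F_p$, this is the same as $\rank(U)=k+1$.

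First I would observe that the direct-summand requirement is automatic over a field. Any subspace $W$ of the finite-dimensional $\F_p$-vector space $\F_p^n$ admits a complement (extend a basis of $W$ to a basis of $\F_p^n$), so $W$ is always a direct summand. Hence, over $\F_p$, unimodularity of a set of $m$ vectors collapses to the single condition that its span has dimension exactly $m$; equivalently, that the $m$ vectors are linearly independent.

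Applying this with $m=k+1$, the set $\{u_0,\ldots,u_k\}$ is a $k$-simplex if and only if $\dim W=k+1$. Since $\rank(U)$ is the dimension of the column space of $U$, which is exactly $W$, we have $\rank(U)=\dim W$, and therefore $\{u_0,\ldots,u_k\}$ is a $k$-simplex iff $\rank(U)=k+1$. The distinctness clause is subsumed automatically: full column rank forces the columns to be pairwise distinct (and nonzero), and conversely a unimodular set, being linearly independent, consists of distinct vectors.

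The argument is essentially a translation between three equivalent formulations of linear independence, so I do not expect a serious obstacle. The only point meriting care is the verification that the direct-summand clause in the definition of \emph{unimodular} imposes no constraint beyond dimension when the ground ring is a field, in contrast to the case $\k=\Z$, where it genuinely does and the analogous criterion would instead involve the vanishing of the greatest common divisor of the maximal minors of $U$.
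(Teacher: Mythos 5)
Your argument is correct. The paper itself states this proposition without proof, citing \cite[Proposition 1.5]{Baralic2023}, so there is no in-paper argument to compare against; your reasoning is the standard one, and the key observation --- that over the field $\F_p$ every subspace is a direct summand, so the unimodularity condition in the definition of $X(\F_p^n)$ collapses to linear independence, i.e.\ to $\rank(U)=k+1$ --- is exactly the right (and complete) justification, including your remark that this is where the case $\k=\Z$ genuinely differs.
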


We shortly write $\mathcal{X}^n=X(\F_p^n)$.
In what follows, we will need the description and the count of minimal nonsimplices of $\mathcal{X}^n$, which is given in the following lemma.

\begin{lemma} \cite[Lemma~1.10]{Baralic2023} \label{p nms2}
Let $p$ be a prime number, $n\geq 2$, $1\leq j\leq n$, and $v_0,\ldots , v_j$ vertices in $\mathcal{X}^n$.
\begin{itemize}
    \item [(a)]
    The set $\{ v_0,\ldots ,v_j\} $ is a minimal $j$-nonsimplex in $\mathcal{X}^n$ if and only if $\{v_0,\ldots ,v_{j-1}\}$ is a simplex in $\mathcal{X}^n$ and $v_j=\sum_{t=0}^{j-1}  a_tv_t$ for some $a_t\in \F_p\setminus \{0\}$.
    \item[(b)] For $j\geq 2$,
    \[
    |\{\text{minimal $j$-nonsimplices of } \mathcal{X}^n\}|=\frac{(p^n-1)(p^n-p)\cdots (p^n-p^{j-1})\cdot (p-1)^{j}}{(j+1)!}.
    \]
    \item[(c)] For $j=1$,
    \[
     |\{\text{minimal $1$-nonsimplices of } \mathcal{X}^n\}|=\frac{(p^n-1)(p-2)}{2}.
    \]
\end{itemize}
\end{lemma}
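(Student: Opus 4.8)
The plan is to translate the whole statement into linear algebra by means of Proposition~\ref{GCD}. Since $\k=\F_p$ is a field, a singleton $\{v\}$ is unimodular exactly when $v\neq 0$, so the vertices of $\mathcal{X}^n$ are the $p^n-1$ nonzero vectors, and a set of vertices spans a simplex if and only if the vectors are linearly independent. Consequently a minimal $j$-nonsimplex is precisely a set $\{v_0,\ldots,v_j\}$ of $j+1$ vectors that is linearly dependent while every $j$-element subset is linearly independent. This reformulation is the backbone of all three parts.

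For part (a) I would first record a symmetric normal form. If $\{v_0,\ldots,v_j\}$ is such a set it has rank exactly $j$ (dependent forces rank $\le j$, and some independent $j$-subset forces rank $\ge j$), so its space of linear relations is one-dimensional; fix a nonzero relation $\sum_{t=0}^j c_tv_t=0$. Were some $c_s=0$, the same relation would be a nontrivial dependence among $\{v_t:t\neq s\}$, contradicting their independence, so every $c_t\neq 0$. To obtain the asserted asymmetric form, independence of $\{v_0,\ldots,v_{j-1}\}$ forces $c_j\neq 0$, and solving yields $v_j=\sum_{t<j}a_tv_t$ with $a_t=-c_t/c_j\neq 0$. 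Conversely, given $\{v_0,\ldots,v_{j-1}\}$ independent and $v_j=\sum_{t<j}a_tv_t$ with all $a_t\neq 0$, the set is dependent, and deleting any $v_s$ with $s<j$ still spans the same $j$-dimensional space because $a_s\neq 0$ recovers $v_s$; hence every $j$-subset is independent and the set is a minimal nonsimplex.

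For the counts in (b) and (c) the idea is to count ordered data and divide by the symmetry. The number of ordered independent $j$-tuples $(v_0,\ldots,v_{j-1})$ in $\F_p^n$ is $(p^n-1)(p^n-p)\cdots(p^n-p^{j-1})$, and each is completed by a coefficient vector in $(\F_p\setminus\{0\})^j$, giving $(p^n-1)(p^n-p)\cdots(p^n-p^{j-1})\cdot(p-1)^j$ ordered configurations $(v_0,\ldots,v_{j-1};v_j)$. For $j\ge 2$ the vertex $v_j$ is automatically distinct from each $v_t$: an equality $v_j=v_s$ would force, by independence, $a_s=1$ and all other $a_t=0$, impossible once $a_t\neq 0$ and more than one term is present. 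Thus the $j+1$ vertices are distinct, and by the symmetric form in (a) every one of the $(j+1)!$ orderings of a fixed minimal nonsimplex is itself a valid ordered configuration (any vertex may serve as the dependent one, precisely because all coefficients of the relation are nonzero). Dividing the count of ordered configurations by $(j+1)!$ gives the formula in (b).

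The case $j=1$ in (c) is exactly where the argument genuinely differs, and this is the point to be careful about. Here $v_1=a_0v_0$ and the distinctness $v_1\neq v_0$ is no longer automatic: it fails precisely when $a_0=1$. Excluding this single value replaces $p-1$ by $p-2$, so there are $(p^n-1)(p-2)$ ordered pairs, and dividing by $2!=2$ yields $\frac{(p^n-1)(p-2)}{2}$. I expect the main obstacle to be the bookkeeping of the overcounting rather than any deep idea: one must confirm that the relation among the $j+1$ vectors has all coefficients nonzero, so that the $(j+1)!$-fold symmetry is exact, and separately handle the collapse of distinctness at $j=1$.
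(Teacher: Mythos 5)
Your proof is correct and follows essentially the same route as the paper: part (a) is the same minimality/linear-dependence argument, and part (b) counts independent tuples times nonzero coefficient vectors and divides out the symmetry, which is exactly the paper's count of $(j-1)$-simplices times $(p-1)^j$ divided by the $j+1$ facets (you just work with ordered tuples and divide by $(j+1)!$ instead). The only divergences are that you explicitly verify the distinctness of $v_j$ from the other vertices for $j\ge 2$ and the exactness of the overcounting factor (points the paper leaves implicit), and that you obtain (c) by directly excluding the coefficient $a_0=1$ rather than by the paper's complementary count of all vertex pairs minus $1$-simplices; both computations are sound and give the same answer.
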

\begin{proof}
$(a)$ If $\{ v_0,\ldots ,v_j\}=:A$ is a minimal nonsimplex then, by definition, $\{v_0,\ldots,v_{j-1}\}$ is a simplex in $\mathcal{X}^n$ and $v_j$ is a linear combination of other vertices or otherwise $A$ would be a simplex in $\mathcal{X}^n$. Thus, $v_j=\sum_{t=0}^{j-1}  a_tv_t$, for some $a_t\in \F_p$. If, for example, $a_i=0$ then $\{v_0,\ldots ,v_{i-1},v_{i+1},\ldots ,v_j\}$ is not a simplex in $\mathcal{X}^n$ which contradicts the fact that $A$ is a minimal nonsimplex.

On the other hand, since $v_j=\sum_{t=0}^{j-1}  a_tv_t$ we get that $A$ is not a simplex in $\mathcal{X}^n$. We have given that $\{v_0,\ldots,v_{j-1}\}$ is a simplex in $\mathcal{X}^n$ and since all $a_t\neq 0$ we conclude that all other faces of $A$ are simplices in $\mathcal{X}^n$.

$(b)$ We have that the number of minimal $j$-nonsimplices is equal to the number of choices of a $(j-1)$-simplices multiplied by the number of choices for coefficients $a_t$ and divided by the number of faces of $A$. So we get
\[
\frac{(p^n-1)(p^n-p)\cdots (p^n-p^{j-1})}{j!}\cdot \frac{(p-1)^{j}}{j+1}
=\frac{(p^n-1)(p^n-p)\cdots (p^n-p^{j-1})\cdot (p-1)^{j}}{(j+1)!}.
\]

$(c)$ The number of minimal 1-nonsimplices equal to the difference between the number of pairs of vertices in $\mathcal{X}^n$ and the number of 1-simplices in $\mathcal{X}^n$, which is
\[
\frac{(p^n-1)(p^n-2)}{2}-\frac{(p^n-1)(p^n-p)}{2}=\frac{(p^n-1)(p-2)}{2}. \qedhere
\]
\end{proof}

Regarding nondegenerate simplicial maps into $\mathcal{X}^n$ we have the following interesting property. It shows that if there is one nondegenerate simplicial map into $\mathcal{X}^n$, then plenty exist.

\begin{prop}
\label{nondeg}
    If $f \colon \mathcal{K} \longrightarrow \mathcal{X}^n$ is a nondegenerate simplicial map, then so are all maps $g \colon \mathcal{K} \longrightarrow \mathcal{X}^n$ with $g(v)=\epsilon_v \cdot f(v)$, $\epsilon_v\in \{1, \ldots , p-1 \}$.
\end{prop}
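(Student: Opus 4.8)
The plan is to deduce the statement from the rank criterion of \Cref{GCD}, reducing everything to the elementary fact that rescaling each member of a linearly independent family by a nonzero scalar leaves the family linearly independent. Recall first that the vertices of $\mathcal{X}^n$ are exactly the nonzero vectors of $\F_p^n$, and that a simplicial map $h\colon\mathcal{K}\to\mathcal{X}^n$ is nondegenerate precisely when it is injective on each simplex; by \Cref{GCD} this is equivalent to requiring that for every simplex $\{w_0,\ldots,w_k\}$ of $\mathcal{K}$ the images $h(w_0),\ldots,h(w_k)$ are linearly independent in $\F_p^n$. So the whole argument amounts to checking this linear-independence condition for $g$.

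First I would verify that $g$ is well defined on vertices: since $f(v)\neq 0$ and $\epsilon_v\in\{1,\ldots,p-1\}$ is a unit of $\F_p$, the vector $g(v)=\epsilon_v f(v)$ is again nonzero and hence a genuine vertex of $\mathcal{X}^n$. Next, fixing a simplex $\sigma=\{w_0,\ldots,w_k\}$ of $\mathcal{K}$, I would use nondegeneracy of $f$ to know that $f(w_0),\ldots,f(w_k)$ are linearly independent, and then argue that the scaled vectors $g(w_i)=\epsilon_{w_i}f(w_i)$ are too: a relation $\sum_{i=0}^{k} c_i\, g(w_i)=0$ rewrites as $\sum_{i=0}^{k}(c_i\epsilon_{w_i})f(w_i)=0$, forcing $c_i\epsilon_{w_i}=0$ for all $i$ and therefore $c_i=0$, because each $\epsilon_{w_i}$ is invertible in $\F_p$.

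Having established linear independence of $g(w_0),\ldots,g(w_k)$, the associated matrix has rank $k+1$, so \Cref{GCD} shows that $g(\sigma)$ is a $k$-simplex of $\mathcal{X}^n$; in particular the $g(w_i)$ are pairwise distinct, so $g$ is a simplicial map that is injective on $\sigma$. Since $\sigma$ was arbitrary, $g$ carries every simplex of $\mathcal{K}$ to a simplex of $\mathcal{X}^n$ of the same dimension, which is precisely the statement that $g$ is nondegenerate. I do not expect any genuine obstacle here: the entire content is the observation that vertexwise rescaling by units of $\F_p$ preserves the rank condition of \Cref{GCD}, so that the group $(\F_p^\times)^{V(\mathcal{K})}$ of such rescalings acts on the set of nondegenerate maps $\mathcal{K}\to\mathcal{X}^n$.
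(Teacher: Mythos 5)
Your proof is correct and follows essentially the same route as the paper's: both reduce nondegeneracy to the rank criterion of Proposition~\ref{GCD} and observe that rescaling the columns by units of $\F_p$ preserves the rank. Your write-up merely spells out the linear-independence computation that the paper states in one line.
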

\begin{proof}
    Let $\{v_0,\ldots , v_j\}$ be a $j$-simplex in $\mathcal{K}$. Then  $\{f(v_0),\ldots , f(v_j)\}$ is a $j$-simplex in $\mathcal{X}^n$. Therefore, by Proposition~\ref{GCD}, $\rank \Big( f(v_0) \cdots f(v_j) \Big)=j+1$. Now we have that
    \[
\rank \Big( g(v_0) \cdots g(v_j) \Big) =\rank \Big( f(v_1) \cdots f(v_j) \Big)=j+1
    \]
  and we conclude that $\{g(v_0),\ldots , g(v_j)\}$ is a $j$-simplex in $\mathcal{X}^n$.
\end{proof}

\section{Mod $p$ Buchstaber invariant of skeleta of simplices}

The skeletons of simplices are a pivotal class of simplicial complexes for determining the Bucshtaber number of an arbitrary simplicial complex. Fukukawa and Masuda studied the real Buchstaber invariant of the skeleton of simplex in \cite{FukMas}. As we mentioned in the introduction, Cho in \cite{10.2969/jmsj/06841695} completed the calculations of the real Buchstaber invariants of $\Delta^m_{(k)}$. The subsequent results extend theirs for the mod $p$ Buchstaber invariants, but we prove them using another method.

Let $\Delta^m$ be the $m$-simplex on the set of vertices $[m+1]:=\{1,\ldots,m+1\}$ and let $\Delta_{(k)}^m$ be its $k$-skeleton.

\begin{prop}Let $p$ be a prime number and $1\le k\le m$. A non-degenerate simplicial map $f \colon \Delta_{(k)}^{m} \longrightarrow \mathcal{X}^n$ is an embedding.
\end{prop}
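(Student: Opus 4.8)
The plan is to reduce the statement to a single observation about the combinatorics of the $k$-skeleton: because $k\ge 1$, the $1$-skeleton of $\Delta_{(k)}^m$ is the complete graph on $[m+1]$. Concretely, every $2$-element subset $\{i,j\}\subseteq[m+1]$ with $i\ne j$ is an edge of $\Delta_{(k)}^m$, since the $k$-skeleton contains all faces of dimension at most $k$ and $1\le k$. This is precisely where the hypothesis $k\ge 1$ enters, and it is essential: for $k=0$ the complex is a discrete set of points carrying no edges, and a nondegenerate map may freely identify distinct vertices, so it need not be an embedding.

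First I would unwind the definition of nondegeneracy. A simplicial map is nondegenerate exactly when its restriction to every simplex is injective, equivalently when it preserves the dimension of every simplex (so that, via Proposition~\ref{GCD}, the images of the vertices of a simplex remain linearly independent). Applying this to each edge $\{i,j\}$ of $\Delta_{(k)}^m$ yields $f(i)\ne f(j)$ whenever $i\ne j$. Since, by the first paragraph, \emph{every} pair of distinct vertices spans an edge, this immediately gives that $f$ is injective on the entire vertex set $[m+1]$.

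It then remains to note that an injective simplicial map is automatically an embedding. Injectivity on vertices guarantees that $f$ carries a $j$-simplex $\{v_0,\dots,v_j\}$ to a set $\{f(v_0),\dots,f(v_j)\}$ of $j+1$ distinct vertices, which is a $j$-simplex of $\mathcal{X}^n$ by nondegeneracy, and that distinct simplices, having distinct vertex sets, map to distinct simplices. Hence $f$ restricts to a simplicial isomorphism onto the subcomplex $f(\Delta_{(k)}^m)\subseteq\mathcal{X}^n$, i.e.\ it is an embedding.

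I do not anticipate a genuine obstacle: the content of the proposition lies entirely in the remark that $k\ge 1$ forces all edges to be present, which upgrades nondegeneracy from ``injective on each simplex'' to ``globally injective.'' The only point deserving a careful sentence is the last step, explaining why global injectivity of a simplicial map is the same as being an embedding onto its image.
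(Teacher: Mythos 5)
Your proposal is correct and follows exactly the paper's argument: since $k\ge 1$ every pair of distinct vertices of $\Delta_{(k)}^m$ spans an edge, so nondegeneracy forces the vertex images to be distinct, and an injective nondegenerate simplicial map is an embedding. You simply spell out the final step (injectivity on vertices implies simplicial isomorphism onto the image) in more detail than the paper does.
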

\begin{proof}
Every two vertices in $\Delta_{(k)}^{m}$
are connected by an edge, so the images of vertices by a non-degenerate map must be distinct. Therefore, $f$ is an embedding.
\end{proof}

\begin{prop}\label{emb}
For every prime number $p$ and every $k\le m$ we have
\[
s_p(\Delta_{(k+1)}^{m+1})\leq s_p(\Delta_{(k)}^{m})\leq s_p(\Delta_{(k)}^{m+1}) \leq s_p(\Delta_{(k)}^{m}) +1.
\]
\end{prop}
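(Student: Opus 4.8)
The plan is to translate everything into a statement about \emph{embedding dimension}. For a simplicial complex $\mathcal{L}$ on $N$ vertices, write $r_p(\mathcal{L})$ for the least $r$ admitting a nondegenerate simplicial map $\mathcal{L}\to\mathcal{X}^r$, so that $s_p(\mathcal{L})=N-r_p(\mathcal{L})$. Since $\Delta^m_{(k)}$ has $m+1$ vertices while $\Delta^{m+1}_{(k)}$ and $\Delta^{m+1}_{(k+1)}$ each have $m+2$ vertices, a one-line computation shows the three displayed inequalities are equivalent, respectively, to
\[
r_p(\Delta^{m+1}_{(k+1)})\ge r_p(\Delta^m_{(k)})+1,\qquad r_p(\Delta^{m+1}_{(k)})\le r_p(\Delta^m_{(k)})+1,\qquad r_p(\Delta^{m+1}_{(k)})\ge r_p(\Delta^m_{(k)}).
\]
I would then establish these three facts about $r_p$ one at a time.

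The rightmost inequality is immediate from restriction: $\Delta^m_{(k)}$ is the full subcomplex of $\Delta^{m+1}_{(k)}$ on the first $m+1$ vertices, so any nondegenerate map $\Delta^{m+1}_{(k)}\to\mathcal{X}^r$ restricts to a nondegenerate map on $\Delta^m_{(k)}$; taking $r=r_p(\Delta^{m+1}_{(k)})$ gives $r_p(\Delta^m_{(k)})\le r_p(\Delta^{m+1}_{(k)})$.

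For the middle inequality I would extend an optimal map by one coordinate. Starting from a nondegenerate $f\colon\Delta^m_{(k)}\to\mathcal{X}^{r}$ with $r=r_p(\Delta^m_{(k)})$, I regard $\F_p^{r}\subset\F_p^{r+1}$ as the first $r$ coordinates and define $g\colon\Delta^{m+1}_{(k)}\to\mathcal{X}^{r+1}$ by $g(i)=(f(i),0)$ on the old vertices $i\in[m+1]$ and $g(m+2)=e_{r+1}$. Using Proposition~\ref{GCD} I would verify the rank condition on each $k$-simplex $S$: if $m+2\notin S$ the rank equals $k+1$ by nondegeneracy of $f$; if $m+2\in S$, the vector $e_{r+1}$ is the only image with nonzero last coordinate, so it contributes one independent direction beyond the image of the remaining $k$ vertices (a face of $\Delta^m_{(k)}$ mapped to rank $k$ by $f$), again giving rank $k+1$. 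This yields $r_p(\Delta^{m+1}_{(k)})\le r+1$.

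The leftmost inequality is the crux, and I would prove it by a projection argument; this is where I expect the real content to sit. Take an optimal nondegenerate $g\colon\Delta^{m+1}_{(k+1)}\to\mathcal{X}^{r}$ with $r=r_p(\Delta^{m+1}_{(k+1)})$, write $w_i=g(i)$, and project along the image of the last vertex: let $\pi\colon\F_p^r\to\F_p^r/\langle w_{m+2}\rangle\cong\F_p^{r-1}$ and set $\bar g(i)=\pi(w_i)$ for $i\in[m+1]$. Since $\{i,m+2\}$ is an edge, $w_i\notin\langle w_{m+2}\rangle$, so each $\bar g(i)$ is a genuine nonzero vertex. For a $k$-simplex $T\subseteq[m+1]$ of $\Delta^m_{(k)}$, the set $T\cup\{m+2\}$ is a $(k+1)$-simplex of $\Delta^{m+1}_{(k+1)}$, so $\spn\{w_i:i\in T\cup\{m+2\}\}$ has dimension $k+2$ by nondegeneracy of $g$; because $\dim\pi(\spn\{w_i:i\in T\})=\dim\spn\{w_i:i\in T\cup\{m+2\}\}-1$, the image $\{\bar g(i):i\in T\}$ has rank $k+1$, so $\bar g$ is nondegenerate. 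Hence $r_p(\Delta^m_{(k)})\le r-1$, which is exactly the claimed bound. The one point to keep straight is this dimension bookkeeping under quotient by a line, which is supplied precisely by the nondegeneracy of $g$ on the augmented simplex $T\cup\{m+2\}$.
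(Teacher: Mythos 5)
Your proof is correct and takes essentially the same route as the paper: restriction handles the third inequality, extending an optimal map by sending the new vertex to $e_{r+1}$ handles the second, and projecting along the image of the last vertex handles the first (the crux). The only cosmetic difference is that you work with the quotient $\F_p^r/\langle g(m+2)\rangle$, whereas the paper first normalizes $g(m+2)=e_r$ and drops the last coordinate and then argues by contradiction; the underlying dimension count is identical.
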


\begin{proof}
Let  $f \colon \Delta_{(k+1)}^{m+1} \rightarrow \mathcal{X}^n$ be an embedding. Without losing on
generality we may assume that $f(m+2)=e_n$.
Let us consider a map $\bar{f} \colon \Delta_{(k)}^m \rightarrow \mathcal{X}^{n-1}$ defined by $\bar{f}(i) = \pi (f(i))$ where $\pi \colon \F_p^n\rightarrow \F_p^{n-1}$ is the projection on the first $n-1$ coordinates. We will prove that $\bar{f}$ is non-degenerate.

Let us assume the contrary. Then there exist distinct $1 \leq i_1< i_2< \dots< i_j \leq  m + 1$ such that
$j \leq k+1$ and such that $a_1\bar{f}(i_1)+\cdots+ a_j\bar{f}(i_j) = (0, \dots, 0)\in \F_p^{n-1}$ for some $(a_1,\ldots,a_j)\in\F_p^j\setminus\{(0,\ldots,0)\}$. We know that $b_1f(i_1)+\cdots+b_jf(i_j) \neq (0, \dots, 0)\in \F_p^{n}$ for all $(b_1,\ldots,b_j)\in\F_p^j\setminus\{(0,\ldots,0)\}$, so $\pi (a_1f(i_1)+\cdots+a_jf(i_j))= a_1\bar{f}(i_1)+\cdots+ a_j\bar{f}(i_j) = (0, \dots, 0)$ implies that $a_1f(i_1)+\cdots+a_jf(i_j)=r e_n$ for some $1\leq r \leq p-1$. Since $j + 1 \leq k + 2$ the set $\{f(i_1), \ldots, f(i_j), f(m + 2)=e_n\}$ is not a simplex in $\mathcal{X}^n$ contradicting a nondegeneracy of $f$.
Thus we constructed a nondegenerate map from $\Delta_{(k)}^m$ to $\mathcal{X}^{n-1}$. The minimal $r$ such that there exists nondegenerate map $\Delta_{(k)}^m\to \mathcal{X}^r$ is smaller than $n$, so $s_p(\Delta_{(k)}^m)\ge m+1-(n-1)=s_p(\Delta_{(k+1)}^{m+1})$.

Let $f\colon \Delta_{(k)}^{m}\longrightarrow \mathcal{X}^n$ be a nondegenerate simplicial map. We consider $\mathcal{X}^n$ as a subcomplex of $\mathcal{X}^{n+1}$ by adding $0$ as the last coordinate to all vertices. We also consider $\Delta_{(k)}^m$ as a subcomplex of $\Delta_{(k)}^{m+1}$ by adding vertex $v_{m+1}$. Now we define an extension $f'$ of $f$ by defining $f'(v_{m+1})= e_{n+1}$. The map $f'\colon \Delta_{(k)}^{m+1} \longrightarrow \mathcal{X}^{n+1}$ is a nondegenerate simplicial map, and the existence of such map gives the second inequality.

The third inequality is immediate as $\Delta_{(k)}^{m}$ embeds into  $\Delta_{(k)}^{m+1}$ so the minimal $r$ such that there is a non-degenerate map $f \colon \Delta_{(k)}^{m+1} \rightarrow \mathcal{X}^r$ is not smaller than the minimal $n$ such that there is a non-degenerate map $f \colon \Delta_{(k)}^{m+1} \rightarrow \mathcal{X}^n$.
\end{proof}

\begin{prop}
\label{k-sk}
Let $n,m,k \in \N$ be such that $k\leq m$ and
\[
(p-1)\cdot \binom{m}{1}+\cdots +(p-1)^k\cdot \binom{m}{k}<p^n-1.
\]
Then, there exists a nondegenerate simplicial map
\[
f \colon \Delta^m_{(k)} \longrightarrow \mathcal{X}^n.
\]
\end{prop}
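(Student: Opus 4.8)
The plan is to realize the desired map by choosing the images $f(1),\dots,f(m+1)\in\F_p^n$ of the $m+1$ vertices of $\Delta^m_{(k)}$ one at a time. By Proposition~\ref{GCD}, asking that $f$ be nondegenerate on $\Delta^m_{(k)}$ is exactly asking that every at most $(k+1)$-element subset of $\{f(1),\dots,f(m+1)\}$ be linearly independent over $\F_p$; equivalently, that no at most $(k+1)$-element subset be a nonsimplex of $\mathcal{X}^n$. I would prove by induction on $\ell$ that one can choose $f(1),\dots,f(\ell)$ so that no subset of them of size $\le k+1$ is a nonsimplex, for all $\ell\le m+1$, the base case $\ell=1$ being any nonzero vector.

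For the inductive step, suppose $f(1),\dots,f(\ell)$ have been chosen, with $\ell\le m$, so that no subset of size $\le k+1$ is a nonsimplex. I want to pick $v=f(\ell+1)$ so that the same holds after adjoining $v$. Since any nonsimplex contains a minimal one, and any minimal nonsimplex of size $\le k+1$ among $f(1),\dots,f(\ell),v$ either lies in $\{f(1),\dots,f(\ell)\}$ (impossible by the inductive hypothesis) or contains $v$, it suffices to forbid exactly those $v$ that complete a minimal nonsimplex of size $\le k+1$. By Lemma~\ref{p nms2}(a) these are precisely the vectors of the form $v=\sum_{t=1}^{j} a_t f(i_t)$, where $1\le j\le k$, the set $\{f(i_1),\dots,f(i_j)\}$ is a $j$-element simplex of $\mathcal{X}^n$, and all $a_t\in\F_p\setminus\{0\}$. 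To these I also add the excluded vector $v=0$, which is not a legitimate vertex.

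It remains to count the forbidden vectors and check that one survives. For a fixed $j$-simplex the $(p-1)^j$ choices of nonzero coefficients give $(p-1)^j$ distinct vectors (the $f(i_t)$ being independent), and there are at most $\binom{\ell}{j}\le\binom{m}{j}$ such $j$-simplices among the chosen vectors. Hence the number of forbidden vectors is at most
\[
1+\sum_{j=1}^{k}(p-1)^j\binom{m}{j}.
\]
By the hypothesis $\sum_{j=1}^{k}(p-1)^j\binom{m}{j}<p^n-1$ this quantity is at most $p^n-1$, which is strictly smaller than the total number $p^n$ of vectors in $\F_p^n$, so an admissible $v$ exists. This completes the induction and produces the required map.

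The step I expect to be the most delicate is the bookkeeping around the exact count: pinning down that excluding minimal nonsimplices containing the new vertex really suffices (which is where the inductive hypothesis and Lemma~\ref{p nms2}(a) do the work), and tracking the off-by-one caused by the zero vector together with the strict inequality in the hypothesis, so that the bound $p^n-1<p^n$ comes out exactly. The use of $\binom{m}{j}$ as an upper bound for the number of $j$-simplices is wasteful but harmless, since the hypothesis is stated precisely with these binomial coefficients.
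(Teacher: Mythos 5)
Your proposal is correct and is essentially the paper's own argument: the paper also builds the map greedily by induction (extending a nondegenerate map on $\Delta^{m-1}_{(k)}$ to $\Delta^{m}_{(k)}$ one vertex at a time), uses Lemma~\ref{p nms2}(a) to identify the forbidden images as nonzero-coefficient combinations of at most $k$ already-chosen, independent vectors, and concludes by the same count $\sum_{j=1}^{k}(p-1)^j\binom{m}{j}<p^n-1$. The only cosmetic difference is that the paper indexes the induction by $m$ with base case $m=k$ (handled by a dimension count), whereas you induct on the number of vertices placed.
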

\begin{proof}
We fix $k$ and prove the lemma by induction on $m$, for $m\geq k$.

If $m=k$ then given inequality induces $m\leq n-1$ and since $\dim \mathcal{X}^n = n-1$ there is a nondegenerate simplicial map from $\Delta^m_{(k)} \equiv \Delta^m$ into $\mathcal{X}^n$.

Assume as an induction hypothesis that the lemma is true for $m-1$. As usual, vertices of $\Delta^m$ are $[m+1]$ and consider $\Delta^{m-1}$ as a subcomplex of $\Delta^m$ on $[m]$. If the given inequality is true for $m$, it also holds for $m-1$. Therefore, using induction hypothesis, there exists a nondegenerate simplicial map $f\colon \Delta^{m-1}_{(k)}\longrightarrow \mathcal{X}^n$ and we want to extend it over $\Delta^m_{(k)}$ to a simplicial map $f'$. More precisely, we need to show that there exists $x\in \mathcal{X}^n$ so that $\{f(i_0),\ldots ,f(i_s),x\}\in \mathcal{X}^n$, for $1\leq s\leq k-1$, and all $i_0,\ldots ,i_s $. Then we define $f'(m+1):=x$. Using Lemma~\ref{p nms2} (a) we conclude that every element from $\F_p^n\setminus \{0\}$ not belonging to the set
\[
A:= \bigcup_{s=0}^{k-1} \{ a_0f(i_0)+\cdots +a_sf(i_s)\,|\, i_0,\ldots i_s \in \{0,1,\ldots ,m-1\}, a_o,\ldots ,a_s\in \F_p\setminus \{0\}\}
\]
will satisfy this condition.
We have that
\[
|A|\leq (p-1)\binom{m}{1}+\cdots +(p-1)^k\binom{m}{k} < p^n-1,
\]
so the set $(\F_p^n\setminus \{0\})\setminus A$ is nonempty and we pick the value for $f'(m+1)$ from it.
\end{proof}

\begin{cor}
Let $0\leq k\leq m$.
Then
\[
s_p(\Delta^m_{(k)})\geq m+1-\left\lceil \log_p \left(1+\binom{m}{0}+(p-1)\cdot \binom{m}{1}+\cdots +(p-1)^k\cdot \binom{m}{k}\right)\right\rceil.
\]
\end{cor}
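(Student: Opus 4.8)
The plan is to read this corollary off directly from Proposition~\ref{k-sk}, together with the definition $s_p(\mathcal{K})=|V(\mathcal{K})|-r$, where $r$ is the least integer admitting a nondegenerate simplicial map $\mathcal{K}\to\mathcal{X}^r$. Since $\Delta^m_{(k)}$ has $m+1$ vertices, proving the claimed lower bound on $s_p(\Delta^m_{(k)})$ is the same as producing an upper bound on this minimal $r$, and Proposition~\ref{k-sk} is precisely a device for manufacturing nondegenerate maps into $\mathcal{X}^n$ once $n$ is large enough.

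Concretely, I would abbreviate
\[
N:=1+\binom{m}{0}+(p-1)\binom{m}{1}+\cdots+(p-1)^k\binom{m}{k},\qquad n:=\left\lceil\log_p N\right\rceil.
\]
The first step is the numerical observation that $n$ is, by definition of the ceiling, the smallest integer with $p^n\geq N$. Writing $S:=(p-1)\binom{m}{1}+\cdots+(p-1)^k\binom{m}{k}$, so that $N=S+2$ (using $\binom{m}{0}=1$), the inequality $p^n\geq N=S+2$ yields $p^n-1\geq S+1>S$, which is exactly the hypothesis $S<p^n-1$ of Proposition~\ref{k-sk}. The second step is then to invoke that proposition to obtain a nondegenerate simplicial map $f\colon\Delta^m_{(k)}\to\mathcal{X}^n$.

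The final step is bookkeeping: the existence of such an $f$ shows that the least $r$ admitting a nondegenerate map $\Delta^m_{(k)}\to\mathcal{X}^r$ satisfies $r\leq n$, whence
\[
s_p(\Delta^m_{(k)})=(m+1)-r\geq (m+1)-n=m+1-\left\lceil\log_p N\right\rceil,
\]
which is the assertion.

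I do not expect a genuine obstacle; the whole content is the translation of Proposition~\ref{k-sk} into the language of $s_p$. The two points deserving minor care are the vertex count (it is $m+1$, not $m$, so that the additive constant in the bound comes out correctly) and the exact matching of the argument of the logarithm with the strict inequality in Proposition~\ref{k-sk}: one must verify that $N=S+2$, so that $p^n\geq N$ delivers $S<p^n-1$ rather than merely $S\leq p^n-1$. I would also check the degenerate case $k=0$ directly, where $N=2$ forces $n=1$ and the resulting bound $s_p(\Delta^m_{(0)})\geq m$ is in fact sharp.
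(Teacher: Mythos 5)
Your proposal is correct and is essentially the paper's own proof: the paper likewise sets $n$ equal to that ceiling and cites Proposition~\ref{k-sk} to produce the nondegenerate map, then concludes by the definition of $s_p$. Your extra check that $p^n\geq S+2$ yields the strict inequality $S<p^n-1$ is a worthwhile detail the paper leaves implicit, but it is the same argument.
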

\begin{proof}
For $n=\lceil \log_p \big(1+\binom{m}{0}+(p-1)\cdot \binom{m}{1}+\cdots +(p-1)^k\cdot \binom{m}{k}\big)\rceil$ we have from Proposition~\ref{k-sk} that there exists a nondegenerate simplicial map $f \colon \Delta^m_{(k)} \longrightarrow \mathcal{X}^n$. By the definition of Buchstaber invariant, we obtain the desired inequality.
\end{proof}

The obtained estimate is not the best possible; that is, equality may not be true. For example, in the case of $\Delta^6_{(2)}$ we have (\cite[Lemma 3.4]{FukMas} and \cite[Table 2]{10.2969/jmsj/06841695})
that
\[
s_2(\Delta^6_{(2)}) = 3,
\]
while the above inequality gives
\[
s_2(\Delta^6_{(2)})\geq 6+1-\lceil \log_2(23)\rceil = 2.
\]

\begin{prop}
\label{m-k}
Let $0\leq k \leq m-1$ and $p$ a prime.
\begin{itemize}
\item[(a)] Then
\[
1\leq s_p(\Delta^m_{(k)})\leq m-k.
\]
\item[(b)]
If $m\leq p$, then
\[
s_p(\Delta^m_{(k)})=m-k.
\]
\end{itemize}
\end{prop}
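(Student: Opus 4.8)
The plan is to use the definition $s_p(\Delta^m_{(k)}) = (m+1)-r$, where $r$ is the least $n$ for which a nondegenerate simplicial map $\Delta^m_{(k)} \to \mathcal{X}^n$ exists, and to bound $r$ from both sides. By Proposition~\ref{GCD}, giving such a map amounts to choosing $m+1$ vectors in $\F_p^n$ (the images of the vertices) so that every subset of size at most $k+1$ is linearly independent; since independence of every $(k+1)$-subset already forces independence of every smaller subset, the condition reduces to: every $k+1$ of the $m+1$ vectors are linearly independent, i.e.\ the vectors are in \emph{general position}.

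For the upper bound in (a), I would observe that $\Delta^m_{(k)}$ contains a $k$-simplex, whose image under a nondegenerate map is a $k$-simplex of $\mathcal{X}^n$; by Proposition~\ref{GCD} this needs $k+1$ independent vectors, forcing $n \ge k+1$. Hence $r \ge k+1$ and $s_p(\Delta^m_{(k)}) = (m+1)-r \le m-k$. For the lower bound $s_p \ge 1$, equivalently $r \le m$, I would write down an explicit nondegenerate map into $\mathcal{X}^m$: send the first $m$ vertices to the standard basis $e_1,\dots,e_m$ of $\F_p^m$ and the last vertex to $w=(1,\dots,1)$. Any subset of $\{e_1,\dots,e_m\}$ is independent, and a subset $\{e_{i_1},\dots,e_{i_s},w\}$ with $s\le k$ is independent because $w$ is supported on all $m$ coordinates while $s\le k\le m-1<m$, so $w\notin\spn\{e_{i_1},\dots,e_{i_s}\}$. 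This works over every $\F_p$, so $r\le m$ and $s_p(\Delta^m_{(k)})\ge 1$.

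For (b) the upper bound is already part of (a), so it remains to prove $s_p\ge m-k$, i.e.\ $r\le k+1$, by placing $m+1$ vectors of $\F_p^{k+1}$ in general position. The natural construction is the normal rational curve (equivalently a Reed--Solomon generator matrix): take $v_t=(1,t,t^2,\dots,t^k)$ for $t\in\F_p$, adjoining $v_\infty=e_{k+1}=(0,\dots,0,1)$ when $m=p$. Any $k+1$ of these vectors give either an honest $(k+1)\times(k+1)$ Vandermonde matrix with distinct nodes, or, when $v_\infty$ is among them, a $k\times k$ Vandermonde minor; in either case the determinant is a product of differences of distinct elements of $\F_p$ and hence nonzero. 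One has $p+1$ available evaluation points ($\F_p$ together with $\infty$), so this yields $m+1$ vectors in general position precisely when $m+1\le p+1$, that is $m\le p$. Thus $r\le k+1$, and together with $r\ge k+1$ from (a) we conclude $r=k+1$ and $s_p(\Delta^m_{(k)})=m-k$.

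I expect the general-position step of (b) to be the crux: one must verify that \emph{all} $(k+1)$-subsets remain independent, including the mixed ones involving the point at infinity, and recognise that $p+1$ is exactly the number of points a normal rational curve over $\F_p$ can carry. This is why the clean equality in (b) is confined to $m\le p$; for $m>p$ the construction necessarily breaks down (this is the MDS phenomenon), and one should not expect $s_p(\Delta^m_{(k)})=m-k$ to persist.
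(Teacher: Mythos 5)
Your proposal is correct and follows essentially the same route as the paper: the dimension count $r\ge k+1$ for the upper bound, the explicit map $e_1,\dots,e_m,(1,\dots,1)$ into $\mathcal{X}^m$ for the lower bound, and the Vandermonde/rational-normal-curve construction $i\mapsto(1,i-1,\dots,(i-1)^k)$ with $e_{k+1}$ as the extra point for part (b). The coding-theoretic framing (general position, MDS, point at infinity) is just a gloss on the same argument, and your closing remark correctly identifies why $m\le p$ is exactly the range where the Vandermonde determinants stay nonzero mod $p$.
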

\begin{proof}
Let $e_i=(0,\ldots,0,1,0,\ldots,0)$ be the $i$-th standard basis vector of $\F_p^n$.
For $k\leq m-1$ there is a nondegenerate map $f\colon \Delta^m_{(k)} \longrightarrow \mathcal{X}^m$ given by $f(i)=e_i$, for $i=1,\ldots ,m$ and $f(m+1)=(1, 1, \ldots ,1)$. Therefore, the minimal $r$ such that there is a nondegenerate map $f\colon \Delta^m_{(k)} \longrightarrow \mathcal{X}^r$ is smaller or equal to $m$. We get $s_p(\Delta^m_{(k)})=m+1-r \geq m+1-m=1$.

Note that there are no nondegenerate map $f\colon \Delta^m_{(k)} \longrightarrow \mathcal{X}^k$ because
    \[
    \dim (\Delta^m_{(k)})=k>\dim (\mathcal{X}^k)=k-1.
    \]
Therefore, the minimal $r$ such that there is a nondegenerate map $f\colon \Delta^m_{(k)} \longrightarrow \mathcal{X}^r$ is bigger than $k$. So we get $s_p(\Delta^m_{(k)})=m+1-r \leq m+1-(k+1)=m-k$.
That proves part (a).

For part (b), we need to show that the smallest $r$ such that there is a nondegenerate simplicial map $f\colon \Delta^m_{(k)} \longrightarrow \mathcal{X}^r$ is $r=k+1$. We already proved in (a) that $r\geq k+1$.
Therefore, we need to produce a nondegenerate map $f\colon \Delta^m_{(k)} \longrightarrow \mathcal{X}^{k+1}$. Define the map $f$ on vertices of $\Delta^m_{(k)} $ by
    \[
    f(i)=(1,i-1,(i-1)^2,\ldots , (i-1)^k), i=1,2,\ldots ,m,\text{ and } \, f(m+1)=(0,\ldots, 0,1)=e_{k+1}.
    \]

Take a maximal simplex $(i_1,\ldots ,i_{k+1})$, $1\leq i_1<i_2<\ldots <i_{k+1}\leq m$ from $\Delta^m_{(k)}$. Then $(f(i_1), \ldots ,f(i_{k+1}))$ is a simplex in $\mathcal{X}^{k+1}$ because the matrix that they form is a Vandermonde square matrix and its determinant is equal to
\[
\prod_{1\leq j<l\leq k+1}(i_l-i_j) \neq 0 \pmod p,
\]
because $1\leq |i_l-i_j| \leq p-1$.

If the maximal simplex that we observe contains the vertex $m+1$, then we calculate the appropriate determinant by first developing it using the last row, and we are again left with the determinant of the Vandermonde matrix, which is nonzero in $\F_p$.
\end{proof}

\begin{rem}
\label{0 or 1}
If we want to construct a nondegenerate map $f\colon \Delta^m_{(k)} \longrightarrow \mathcal{X}^r$ we may assume that $f(i)=e_i$ for $i=1,\ldots ,r$.

Suppose this is not the case, and we have a nondegenerate map  $f\colon \Delta^m_{(k)} \longrightarrow \mathcal{X}^r$. First, suppose that $r$ is the smallest so that such a nondegenerate map $f$ exists. Then $\Lin(\im(f))=\F_p^r$.

We may rearrange the order of vertices of $\Delta^m$ so that $\{f(1),\ldots,f(r)\}$ is a basis of $\F_p^r$. There is a nondegenerate map $f_A\colon \mathcal{X}^r \longrightarrow \mathcal{X}^r$ induced by a bijective linear map $A\colon \F_p^{r}\to \F_p^{r}$ such that $A(f(i))=e_i$ for $i=1,\ldots, r$ and the composition $f_A\circ f$ is a desired nondegenerate map.

If $r$ is not the smallest so that such a nondegenerate map $f$ exists, let $s=\dim \Lin(\im(f))$. Then, using the same reasoning as above, we may assume $f(i)=e_i$, $i=1,\ldots ,s$. Moreover $\im(f)\subset \F_p^s\subset \F_p^r$. We may redefine $f(i)=e_i$, $i=s+1, \ldots, r$ and $f$ will remain nondegenerate.

Furthermore, we may assume that all coordinates of $f(r+1)$ are either 0 or 1. If not, let $i_1,\ldots,i_t$ be nonzero coordinates of $f(r+1)$. Denote by $f(i)_j$ the $j$-th coordinate of $f(i)$. Then
\[
a_1e_1+\ldots +a_{r}e_{r}+a_{r+1}f(r+1)+a_{r+2}f(r+2)+\ldots+a_{m+1}f(m+1)=0
\]
if and only if
\[
a_1\delta_1e_1+\ldots +a_{r}\delta_{r}e_{r}+a_{r+1}f^*(r+1)+a_{r+2}f^*(r+2)+\ldots+a_{m+1}f^*(m+1)=0,
\]
where $\delta_j=f(r+1)_j^{-1}$, for $j=i_l$, for $l=1,\ldots ,t$, and $\delta_j=1$, otherwise, and $f^*(l)_j=\delta_j f(l)_j$ for $l=r+1,\ldots,m+1$ and all $j=1,\ldots,r$. Practically, the $j$-th coordinate of the second linear combination is obtained from the $j$-th coordinate of the first one multiplied by $\delta_j$. Therefore $f^*$ is a nondegenerate map, $f^*(i)=e_i$, for $i=1,\ldots ,r$, and $f^*(r+1)$ has all coordinates equal to 0 or 1.
\end{rem}

\begin{prop}
\label{sp>1}
   Let $m\geq 2$ and $0\leq k\leq m$.
  \begin{itemize}
\item[(a)]
 If $k\neq p-1 \pmod p$, then
 $s_p(\Delta^m_{(k)})\geq 2$ if and only if $m\geq k + \left[\frac{k}{p}\right]+2$.
 \item[(b)]
 If $k= p-1 \pmod p$, then
 $s_p(\Delta^m_{(k)})\geq 2$ if and only if $m\geq k + \left[\frac{k}{p}\right]+3$.
  \end{itemize}
\end{prop}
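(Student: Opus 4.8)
The plan is to translate the statement $s_p(\Delta^m_{(k)})\ge 2$ into a purely linear-algebraic existence question and then solve the resulting extremal problem. Since $s_p(\Delta^m_{(k)})=m+1-r$ with $r$ the least dimension admitting a nondegenerate map, and since $\mathcal{X}^i$ embeds in $\mathcal{X}^j$ for $i<j$, the inequality $s_p(\Delta^m_{(k)})\ge 2$ is equivalent to $r\le m-1$, i.e. to the existence of a nondegenerate simplicial map $f\colon\Delta^m_{(k)}\to\mathcal{X}^{m-1}$. By Proposition~\ref{GCD}, such a map is exactly a choice of $m+1$ nonzero vectors $w_1,\dots,w_{m+1}\in\F_p^{m-1}$ with the property that every $(k+1)$ of them are linearly independent. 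So the proposition reduces to determining for which $m,k,p$ such a configuration exists.

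First I would dualize. Let $C\subseteq\F_p^{m+1}$ be the space of linear dependencies among $w_1,\dots,w_{m+1}$, that is, the kernel of the $(m-1)\times(m+1)$ matrix with columns $w_i$; then $\dim C\ge (m+1)-(m-1)=2$. A nonzero vector of $C$ of Hamming weight $s$ is precisely a linear dependence among $s$ of the $w_i$, so ``every $(k+1)$ columns independent'' is equivalent to ``$C$ has minimum Hamming weight at least $k+2$''. Conversely, any subspace $C\subseteq\F_p^{m+1}$ of dimension $\ge 2$ and minimum weight $\ge k+2$ arises this way: taking the columns $w_i$ of a generator matrix of $C^{\perp}$ (an $(m-1)\times(m+1)$ matrix when $\dim C=2$) reproduces kernel $=C$, and minimum weight $\ge 2$ forces every $w_i\ne 0$. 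Passing to a $2$-dimensional subspace of $C$ only raises the minimum weight, so the existence question becomes: \emph{does $\F_p^{m+1}$ contain a $2$-dimensional subspace all of whose nonzero vectors have weight $\ge k+2$?}

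The core step is the extremal computation for such $2$-dimensional subspaces. Writing one via a $2\times(m+1)$ generator matrix, its columns are vectors in $\F_p^2$, and the weight of the codeword in direction $u$ equals $m+1$ minus the number of columns lying on the line $u^{\perp}$ through the origin. There are exactly $p+1$ such lines, so by pigeonhole some line carries at least $\lceil (m+1)/(p+1)\rceil$ of the nonzero columns, giving minimum weight at most $(m+1)-\lceil(m+1)/(p+1)\rceil$; zero columns together with monotonicity of $n\mapsto n-\lceil n/(p+1)\rceil$ only sharpen this bound. Conversely, distributing the $m+1$ columns as evenly as possible among the $p+1$ lines attains this value. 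Hence the desired subspace exists if and only if
\[
(m+1)-\left\lceil \frac{m+1}{p+1}\right\rceil\ge k+2.
\]

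It remains to unwind this inequality into the two stated cases. Writing $k=p\lfloor k/p\rfloor + b$ with $0\le b\le p-1$, I would compute the least $n=m+1$ satisfying $n-\lceil n/(p+1)\rceil\ge k+2$ by expanding $n$ in base $p+1$; the function $n-\lceil n/(p+1)\rceil$ is nondecreasing with increments in $\{0,1\}$, the flat steps occurring exactly as $n$ passes a multiple of $p+1$, and this is the source of the dichotomy. A short check shows the threshold is $n=k+\lfloor k/p\rfloor+3$ when $b\ne p-1$ and $n=k+\lfloor k/p\rfloor+4$ when $b=p-1$, giving the conditions $m\ge k+\lfloor k/p\rfloor+2$ and $m\ge k+\lfloor k/p\rfloor+3$ respectively. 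The main obstacle is precisely this ceiling bookkeeping: one must track how the jump of $\lceil n/(p+1)\rceil$ interacts with the residue of $k$ modulo $p$, which is exactly what singles out the case $k\equiv p-1\pmod p$.
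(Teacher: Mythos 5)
Your argument is correct, and it reaches the paper's threshold by a genuinely different route. You dualize: a nondegenerate map $\Delta^m_{(k)}\to\mathcal{X}^{m-1}$ is a list of $m+1$ nonzero columns in $\F_p^{m-1}$ with every $k+1$ independent, which exists iff the dependency space contains (equivalently, iff there exists) a $2$-dimensional code $C\subseteq\F_p^{m+1}$ of minimum Hamming weight $\ge k+2$; you then compute the optimal minimum distance of a $2$-dimensional length-$n$ code over $\F_p$ as $n-\lceil n/(p+1)\rceil$ by distributing the columns of a $2\times n$ generator matrix among the $p+1$ lines of $\F_p^2$ (the Griesmer bound in dimension two, together with its sharpness). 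The paper instead normalizes the map so that $f(i)=e_i$ for $i\le m-1$ (Remark~\ref{0 or 1}) and reduces the question to a small integer program in the parameters $x_1,x_2,x_{11}$ recording the supports of $f(m)$ and $f(m+1)$, where the quantity $M=\left[\frac{x_{11}+p-2}{p-1}\right]$ plays exactly the role of your largest line class; the two computations are equivalent (the codewords $af(m)+bf(m+1)-\sum_j(\cdot)e_j$ are precisely the dependencies the paper controls), but yours packages the whole case analysis into the single inequality $(m+1)-\lceil(m+1)/(p+1)\rceil\ge k+2$, whose unwinding I checked does produce the stated dichotomy at $k\equiv p-1\pmod p$. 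What your formulation buys is conceptual clarity and generality: the same dualization shows that $s_p(\Delta^m_{(k)})$ equals the largest dimension of a linear code of length $m+1$ over $\F_p$ with minimum distance at least $k+2$, so the case $s_p\ge 2$ treated here is just the first instance of a coding-theoretic reformulation, whereas the paper's hands-on normalization does not obviously extend beyond two extra vectors. The only places you are terse --- absorbing zero columns via monotonicity of $n\mapsto n-\lceil n/(p+1)\rceil$, and the final base-$(p+1)$ bookkeeping --- are both routine and correct as described.
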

\begin{proof}
    We have that $s_p(\Delta^m_{(k)})\geq 2$ if and only if there is a nondegenerate map $f \colon \Delta^m_{(k)} \longrightarrow \mathcal{X}^{m-1}$. If we try to construct such a map $f$, we may assume that $f(i)=e_i$, for $i=1,\ldots ,m-1$ and that all coordinates of $f(m)$ are either 0 or 1 (see Remark~\ref{0 or 1}). It remains to define $f(m)$ and $f(m+1)$ so that $f$ is a nondegenerate map.

    Denote by $x_1$ the number of nonzero coordinates of $f(m)$, which are zero in $f(m+1)$, by $x_2$ the number of nonzero coordinates of $f(m+1)$, which are zero in $f(m)$, and by $x_{11}$ the number of coordinates which are nonzero in both $f(m)$ and $f(m+1)$.
   In order that $\{f(i_1),\ldots ,f(i_k),f(m)\}$ form a simplex, for all $1\leq i_1 < \ldots <i_k\leq m-1$ we need to have
    \begin{equation}
    \label{sp=1 1}
    x_1+x_{11}\geq k+1.
     \end{equation}
    Similarly, for $f(m+1)$, we need to have
    \begin{equation}
    \label{sp=1 2}
    x_2+x_{11}\geq k+1.
      \end{equation}

Finally, in order that $\{f(i_1),\ldots ,f(i_{k-1}),f(m),f(m+1)\}$ form a simplex, for all $1\leq i_1 < \ldots <i_{k-1}\leq m-1$ we need to have
\begin{equation}
 \label{sp=1 3a}
x_1+x_2+ x_{11}-M \geq k
\end{equation}
where $M$ is the maximal number of equal coordinates of $f(m+1)$ which are nonzero in $f(m)$. The number on the left hand side of the previous inequality is the minimal number of nonzero coordinates among all linear combinations of the form $af(m)+bf(m+1)$, $a,b \neq 0$.

We need $M$ to be as small as possible, and it will happen if we distribute all coordinates $1,2,\ldots ,p-1$ evenly at $x_{11}$ places. More precisely, $M= [\frac{x_{11}+p-2}{p-1}]$ and our equation~\eqref{sp=1 3a} became
\begin{equation}
\label{sp=1 3}
    x_1+x_2+x_{11}-\left[\frac{x_{11}+p-2}{p-1}\right]\geq k.
\end{equation}

To summarize, we need to have a nonnegative integer solution of the system of inequalities
\begin{equation}
\label{system sp=1}
    x_1+x_{11}\geq k+1, \,\,
    x_2+x_{11}\geq k+1,\,\,
    x_1+x_2+x_{11}-\left[\frac{x_{11}+p-2}{p-1}\right]\geq k.
\end{equation}
which satisfies
 the necessary condition
\begin{equation}
\label{sp=1 cond}
    x_1+x_2+x_{11} \leq m-1.
\end{equation}

 For the proof of part (a), we put equalities instead of inequalities in the system~\eqref{system sp=1} to obtain the system of equations with a unique solution. Denote $k=\alpha \cdot p + \beta$, for $\alpha \geq 0,\, 0\leq \beta \leq p-2$, and $x_{11}=A\cdot (p-1)+1+B$, for $0\leq B\leq p-2$. We get $\left[\frac{x_{11}+p-2}{p-1}\right] = A+1$ and the unique solution of the system is
 \[
 x_1^0=x_2^0=\alpha , \,\, x_{11}^0=\alpha\cdot (p-1)+1+\beta=k+1-\alpha.
 \]
We have
\[
x_1^0+x_2^0+x_{11}^0=\alpha\cdot (p+1)+1+\beta = k+\alpha +1= k+\left[\frac{k}{p}\right]+1.
\]

Let $x_1, x_2, x_{11}$ be a solution of the system~\eqref{system sp=1}. Let $\epsilon_I\geq 0, I\in \{1,2,11\}$ denote by how much appropriate inequality differs from equality. For example, $\epsilon_1 := x_1+x_{11}-k-1$. Denote again $x_{11}=A\cdot (p-1)+1+B$, for $0\leq B\leq p-2$.
We have $\left[\frac{x_{11}+p-2}{p-1}\right] = A+1$ and~\eqref{sp=1 3} gives
\[
k+1+\epsilon_1+k+1+\epsilon_2-A(p-1)-1-B-A-1=k+\epsilon_{11}
\]
\[
Ap+B=k+\epsilon_1+\epsilon_2-\epsilon_{11}
\]
Put $k=\alpha \cdot p + \beta$, for $\alpha \geq 0,\, 0\leq \beta \leq p-2$ and we get
\[
(A+\epsilon_{11}-\alpha )p = \epsilon_1+\epsilon_2+(p-1)\epsilon_{11} + (\beta -B)\geq 0+0+0+0-(p-2)=-(p-2).
\]
Since all values involved are integers, we conclude that
\[
A+\epsilon_{11}-\alpha\geq 0.
\]
From that inequality and~\eqref{sp=1 3} we get
\[
x_1+x_2+x_{11}=k+\epsilon_{11}+A+1\geq k+\alpha +1=x_1^0+x_2^0+x_{11}^0.
\]
Therefore, the minimal value of $x_1+x_2+x_{11}$ is $x_1^0+x_2^0+x_{11}^0$ and we need it to be smaller or equal than $m-1$. We get
\[
x_1^0+x_2^0+x_{11}^0= k+\left[\frac{k}{p}\right]+1\leq m-1
\]
which completes the proof.

For the proof of part (b), put $k=\alpha \cdot p +p-1$. Let $x_1,x_2, x_{11}$ be a solution of the system~\eqref{system sp=1}, with $\epsilon_I\geq 0$ defined as in part (a) and put $x_{11}=A\cdot (p-1)+1+B$, $0\leq B\leq p-2$. We calculate $x_1$ and $x_2$ from~\eqref{sp=1 1} and~\eqref{sp=1 2} and put in~\eqref{sp=1 3} to get
\[
k+1+\epsilon_1-x_{11}+k+1+\epsilon_2-x_{11} +x_{11}-A-1=k+\epsilon_{11}
\]
When we rearrange summands, we get
\[
(A+\epsilon_{11} -\alpha -1)\cdot p = \epsilon_1+\epsilon_2 +(p-1)\epsilon_{11}-B-1\geq -p+1.
\]
Since we are working with integers we conclude that $A+\epsilon_{11} -\alpha -1\geq 0$.
From~\eqref{sp=1 3} we get
\[
x_1+x_2+x_{11} = k+A+1+\epsilon_{11}\geq k+1+\alpha+1= k+\left[\frac{k}{p}\right]+2.
\]
For $\epsilon_1=\epsilon_2=0$ and $\epsilon_{11}=1$ we have a solution of the system~\eqref{system sp=1} which achieves that minimum:
\[
x_1=x_2=\left[\frac{k}{p}\right]+1, \,\,x_{11}=k-\left[\frac{k}{p}\right].
\]
Therefore, there is a solution of the system~\eqref{system sp=1} which satisfies~\eqref{sp=1 cond} if and only if
\[
k+\left[\frac{k}{p}\right]+2\leq m-1.\qedhere
\]
\end{proof}

Note that Proposition~\ref{m-k}(a) and Proposition~\ref{sp>1} gives us all values of the pairs $(m,k)$ for which $s_p(\Delta^m_{(k)})=1$. We may rewrite the conditions from Proposition~\ref{sp>1} to get the following.
\begin{cor}
\label{sp=1 cor}
    Let $m\geq 2, \, 0\leq k\leq m$ and $p$ a prime. Then $s_p(\Delta_{(k)}^m)=1$ if and only if
    \[
    m-\left[\frac{m}{p+1}\right]-1\leq k \leq m-1,\text{ for } k\neq p-1 \pmod p
    \]
    or
    \[
   p\cdot \left[\frac{m-1}{p+1}\right]+p-1 \leq k \leq m-1, \text{ for } k=p-1 \pmod p.
    \]
\end{cor}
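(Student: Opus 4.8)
The plan is to derive the corollary directly from Propositions~\ref{m-k} and~\ref{sp>1} by characterizing the equality $s_p(\Delta^m_{(k)})=1$ as the conjunction $s_p\ge 1$ and $s_p\not\ge 2$, and then rewriting the resulting inequalities, which bound $m$ in terms of $k$, as bounds on $k$ in terms of $m$. First I would settle the trivial range. Since $s_p(\Delta^m_{(k)})$ is a nonnegative integer, $s_p=1$ is equivalent to $s_p\ge 1$ together with $s_p\le 1$. By Proposition~\ref{m-k}(a) we have $s_p(\Delta^m_{(k)})\ge 1$ for all $0\le k\le m-1$, while for $k=m$ the complex $\Delta^m_{(m)}=\Delta^m$ is the full simplex, whose whole vertex set is a simplex and must therefore map to $m+1$ linearly independent vectors, forcing $s_p(\Delta^m)=0\neq 1$. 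Hence $s_p=1$ already entails $k\le m-1$, which is exactly the upper bound common to both ranges in the statement, and on the range $k\le m-1$ it remains to decide when $s_p\not\ge 2$.

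Next I would negate the criterion of Proposition~\ref{sp>1}. For $k\le m-1$, taking complements gives that $s_p(\Delta^m_{(k)})\le 1$ is equivalent to $m\le k+[\frac{k}{p}]+1$ when $k\not\equiv p-1\pmod p$, and to $m\le k+[\frac{k}{p}]+2$ when $k\equiv p-1\pmod p$. Together with $m\ge k+1$ this produces the two sandwiches $k+1\le m\le k+[\frac{k}{p}]+1$ and $k+1\le m\le k+[\frac{k}{p}]+2$, respectively.

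The remaining step, and the only real work, is the floor-function bookkeeping that converts these bounds on $m$ into the stated bounds on $k$. I would carry this out by fixing $k$ and checking that the admissible interval of $m$ agrees with the one cut out by the corollary, which avoids the boundary ambiguities that arise if one instead fixes $m$ and inverts the floors. When $k\equiv p-1\pmod p$, writing $k=\alpha p+(p-1)$ gives $[\frac{k}{p}]=\alpha$ and turns the sandwich into $(\alpha+1)p\le m\le(\alpha+1)(p+1)$; since $m\le(\alpha+1)(p+1)$ is equivalent to $[\frac{m-1}{p+1}]\le\alpha$, which is the same as $k\ge p[\frac{m-1}{p+1}]+p-1$, this reproduces the second displayed range exactly. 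When $k\not\equiv p-1\pmod p$, writing $k=\alpha p+\beta$ with $0\le\beta\le p-2$ gives the sandwich $\alpha p+\beta+1\le m\le\alpha(p+1)+\beta+1$; here a short computation shows that at the top endpoint one has $m-[\frac{m}{p+1}]-1=k$, while replacing $m$ by $m+1$ raises $m-[\frac{m}{p+1}]-1$ to $k+1$, so the upper bound on $m$ is precisely $k\ge m-[\frac{m}{p+1}]-1$. The main obstacle I anticipate is keeping the two congruence classes of $k$ straight and handling the jumps of $m\mapsto m-[\frac{m}{p+1}]-1$ across multiples of $p+1$; slicing by $k$, where the hypothesis $\beta\le p-2$ is automatically built in, is what keeps this clean.
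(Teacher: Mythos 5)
Your proposal is correct and follows essentially the same route as the paper: both deduce the corollary by combining Proposition~\ref{m-k} with the negation of Proposition~\ref{sp>1} and then rewriting the resulting inequality via the decomposition $k=\alpha p+\beta$. The only difference is presentational --- you slice by $k$ and invert the bound on $m$ using the monotonicity of $m\mapsto m-\left[\frac{m}{p+1}\right]-1$, while the paper fixes $m$ and cases on $\alpha$ relative to $\left[\frac{m}{p+1}\right]$ --- and your explicit handling of $k=m$ is a small point the paper leaves implicit.
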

\begin{proof}
For $k\neq p-1 \pmod p$ we have from Proposition~\ref{m-k} and Proposition~\ref{sp>1} that
$s_p(\Delta_{(k)}^m)=1$ if and only if
\[
k + \left[\frac{k}{p}\right]> m-2\text{ and } k\leq m-1.
\]
Denote $k=\alpha \cdot p + \beta$, with $0\leq \beta \leq p-2$ and the first inequality becomes
\[
\alpha \cdot p + \beta +\alpha \geq m-1
\]
\[
\alpha \cdot (p+1)\geq m-1-\beta
\]
  If $\alpha \geq \left[\frac{m}{p+1}\right]+1$ the inequality holds, while for $\alpha \leq \left[\frac{m}{p+1}\right]-1$ it is false. For $\alpha = \left[\frac{m}{p+1}\right]$ we have
  \[
  \beta \geq m-1 - \alpha (p+1) = m-1 - \left[\frac{m}{p+1}\right]\cdot (p+1)
  \]
  which gives us
  \[
  k=\alpha p+\beta \geq \left [\frac{m}{p+1}\right ]\cdot p + m-1 - \left [\frac{m}{p+1}\right ]\cdot (p+1) = m-\left [\frac{m}{p+1}\right]-1.
  \]

For $k= p-1 \pmod p$ we have from Proposition~\ref{m-k} and Proposition~\ref{sp>1} that
$s_p(\Delta_{(k)}^m)=1$ if and only if
\[
k + \left[\frac{k}{p}\right]> m-3\text{ and } k\leq m-1.
\]
Denote $k=\alpha \cdot p + p-1$ and the first inequality becomes
\[
\alpha \cdot p + p-1 +\alpha \geq m-2
\]
\[
\alpha +1 \geq \frac{m}{p+1}\text{ \,  and this is equivalent to  \, } \alpha \geq  \left [\frac{m-1}{p+1}\right ]
\]
Therefore,
\[
k=\alpha \cdot p + p-1 \geq  p\cdot \left [\frac{m-1}{p+1}\right]+p-1.
\]

\end{proof}
As an easy application of the Corollary~\ref{sp=1 cor} one may get the following.

\begin{cor} We have for every prime $p$:
\begin{itemize}
\item[(a)] $s_p(\Delta^m_{(m-1)})=1$, for all $m\geq 2$.
\item[(b)] $s_p(\Delta^m_{(m-2)})=1$ if and only if $m\geq p+1$.
    \end{itemize}
\end{cor}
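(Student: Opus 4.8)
The plan is to derive both parts as direct specializations of Corollary~\ref{sp=1 cor}, which characterizes exactly when $s_p(\Delta^m_{(k)})=1$. Since part~(a) concerns $k=m-1$ and part~(b) concerns $k=m-2$, I would first determine, for each part, whether $k\equiv p-1\pmod p$ in order to know which of the two branches of Corollary~\ref{sp=1 cor} applies, and then check the corresponding inequality.

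For part~(a), take $k=m-1$. Here the upper bound $k\le m-1$ is satisfied with equality, so the value $s_p(\Delta^m_{(m-1)})$ is either $1$ or, by Proposition~\ref{m-k}(a), at most $m-k=1$, hence it can only equal $1$ provided it is $\ge 1$. In fact Proposition~\ref{m-k}(a) already gives $1\le s_p(\Delta^m_{(m-1)})\le m-(m-1)=1$, which forces equality for all $m\ge 2$ with no case distinction needed. This is the cleanest route, so I would present part~(a) as an immediate consequence of the sandwich in Proposition~\ref{m-k}(a) rather than invoking the corollary.

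For part~(b), take $k=m-2$, so that $m=k+2$. I would split into the two congruence cases of Corollary~\ref{sp=1 cor}. When $k\not\equiv p-1\pmod p$, the condition is $m-\big[\tfrac{m}{p+1}\big]-1\le k=m-2$, i.e. $\big[\tfrac{m}{p+1}\big]\ge 1$, which is equivalent to $m\ge p+1$. When $k\equiv p-1\pmod p$, the condition is $p\cdot\big[\tfrac{m-1}{p+1}\big]+p-1\le m-2$; substituting $k=m-2\equiv p-1\pmod p$ gives $m\equiv p+1\equiv 1\pmod p$, and I would verify that this inequality, too, reduces to $m\ge p+1$ (for $m=p+1$ one checks directly that $\big[\tfrac{m-1}{p+1}\big]=\big[\tfrac{p}{p+1}\big]=0$, making the left side $p-1=m-2$, so equality holds and the condition is met, while for $m<p+1$ in this congruence class no valid $m\ge 2$ with $k=m-2\ge 0$ survives).

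The main obstacle, such as it is, lies in the bookkeeping of part~(b): one must handle the two congruence branches of the corollary simultaneously and confirm that they both collapse to the single clean threshold $m\ge p+1$, paying attention to the boundary case $m=p+1$ where the inequality is tight. The only genuinely delicate point is checking that in the congruence case $k\equiv p-1\pmod p$ the floor function $\big[\tfrac{m-1}{p+1}\big]$ behaves as claimed at the boundary; everything else is routine arithmetic with floors, and I expect no conceptual difficulty beyond verifying that the two branches agree on the final condition.
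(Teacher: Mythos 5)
Your proposal is correct and follows essentially the same route as the paper, which states this corollary as an immediate application of Corollary~\ref{sp=1 cor} (your shortcut for part (a) via the sandwich $1\le s_p(\Delta^m_{(m-1)})\le m-(m-1)=1$ from Proposition~\ref{m-k}(a) is a harmless simplification). The one step you defer in part (b) — that for every $m>p+1$ with $m\equiv 1\pmod p$ the inequality $p\cdot\left[\frac{m-1}{p+1}\right]+p-1\le m-2$ still holds — does go through: writing $m=ap+1$ with $a\ge 1$ it reduces to $\left[\frac{ap}{p+1}\right]\le a-1$, which is clear since $\frac{ap}{p+1}=a-\frac{a}{p+1}<a$.
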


Table \ref{TabelBuchstaber3} gives the mod 3 Buchstaber invariant values of the skeletons of the simplex of dimension not greater than ten. The red values indicate that the mod 3 is distinct from the corresponding real (the mod 2) Buchstaber invariant.

\begin{center}
\begin{table}[ht]\label{s3tab}
\begin{tabular}{|c|c c c c c c c c c c c c c c c|}
\hline
$m$\textbackslash$k$ & 0 & 1 & 2 & 3 & 4 & 5 & 6 & 7 & 8 & 9 & 10 & 11 & 12 & 13 & 14\\
\hline
2 & 2 & 1 & 0 &  &  &  & & &  & & & & & &\\
3 & 3 & 2 & 1 & 0 &  &  & & &  & & & & & &\\
4 & 4 &	2 &	1 &	1 &	0 &	& & &  & & & & & &\\
5 &	5 &	3 &	2 &	1 &	1 &	0 &	 &	 &  & & & & & &\\
6 &	6 &	4 &	3 &	\textcolor{red}{2} &	1 &	1 &	0 &	 &  & & & & & &\\
7 &	7 &	5 &	4 &	\textcolor{red}{3} &	2 &	1 &	1 &	0 &  & & & & & &\\
8 &	8 &	6 &	\textcolor{red}{5} &	\textcolor{red}{4} & \textcolor{red}{3}	 &	\textcolor{red}{1} &	1 &	1 & 0 & & & & & &\\
9 &	9 &	7 &	\textcolor{red}{6} & \textcolor{red}{5}	 & \textcolor{red}{4}	 & \textcolor{red}{2}	 & \textcolor{red}{1}	 &	1 & 1 & 0 & & & & &\\
10 &10&	8 &	 &	 &	 &	 &	 &	 & 1 & 1 &0 & & & &\\
\hline
\end{tabular}
\\[1mm]
\caption{The table of the mod $3$ Buchstaber invariants for $\Delta_{(k)}^m$. }\label{TabelBuchstaber3}
\end{table}
\end{center}

In the next table we compare the mod $p$ Buchstaber invariants of the skeletons of simplex $\Delta^m$ for $m\leq 9$ and $p\in\{2, 3, 5, 7\}$. The values presented in Table \ref{TabelBuchstaberp} are obtained either from previously established claims and results from \cite{Baralic2023}, or through direct computation.

\begin{center}
\begin{table}[ht]\label{sptab}
\begin{tabular}{|c|c c c c c c c c c|}
\hline
$m$\textbackslash$k$ & 1 & 2 & 3 & 4 & 5 & 6 & 7 & 8 & 9\\
\hline
2 &  1,\textcolor{red}{1},\textcolor{blue}{1},\textcolor{green}{1} & 0 &  &  &  & & &  &\\
3 & 1,\textcolor{red}{2},\textcolor{blue}{2},\textcolor{green}{2} & 1,\textcolor{red}{1},\textcolor{blue}{1},\textcolor{green}{1} & 0 &  &  & & &  &\\
4 & 2,\textcolor{red}{2},\textcolor{blue}{3},\textcolor{green}{3} & 1,\textcolor{red}{1},\textcolor{blue}{2},\textcolor{green}{2} & 1,\textcolor{red}{1},\textcolor{blue}{1},\textcolor{green}{1} &	0 &	& & &  &\\
5 & 3,\textcolor{red}{3},\textcolor{blue}{4},\textcolor{green}{4} & 2,\textcolor{red}{2},\textcolor{blue}{3},\textcolor{green}{3} & 1,\textcolor{red}{1},\textcolor{blue}{2},\textcolor{green}{2} & 1,\textcolor{red}{1},\textcolor{blue}{1},\textcolor{green}{1} &	0 &	 &	 &  &\\
6 & 4,\textcolor{red}{4},\textcolor{blue}{5},\textcolor{green}{5} & 3,\textcolor{red}{3},\textcolor{blue}{4},\textcolor{green}{4} & 1,\textcolor{red}{2},\textcolor{blue}{2},\textcolor{green}{3} & 1,\textcolor{red}{1},\textcolor{blue}{1},\textcolor{green}{2} & 1,\textcolor{red}{1},\textcolor{blue}{1},\textcolor{green}{1} &	0 &	 &  &\\
7 & 4,\textcolor{red}{5},\textcolor{blue}{5},\textcolor{green}{6} & 4,\textcolor{red}{4},\textcolor{blue}{4},\textcolor{green}{5} & 2,\textcolor{red}{3},\textcolor{blue}{3},\textcolor{green}{4} & 1,\textcolor{red}{2},\textcolor{blue}{2},\textcolor{green}{3} & 1,\textcolor{red}{1},\textcolor{blue}{1},\textcolor{green}{2} & 1,\textcolor{red}{1},\textcolor{blue}{1},\textcolor{green}{1} &	0 &  &\\
8 & 5,\textcolor{red}{6},\textcolor{blue}{6},\textcolor{green}{6}& 4,\textcolor{red}{5},\textcolor{blue}{5}\phantom{,4} & 2,\textcolor{red}{4},\textcolor{blue}{4}\phantom{,4} & 2,\textcolor{red}{3},\textcolor{blue}{3}\phantom{,4} & 1,\textcolor{red}{1},\textcolor{blue}{2}\phantom{,4} & 1,\textcolor{red}{1},\textcolor{blue}{1},\textcolor{green}{1} & 1,\textcolor{red}{1},\textcolor{blue}{1},\textcolor{green}{1}& 0 &\\
9 & 6,\textcolor{red}{7},\textcolor{blue}{7},\textcolor{green}{7} & 5,\textcolor{red}{6}\phantom{,4,4} & 3,\textcolor{red}{5}\phantom{,4,4} & 2,\textcolor{red}{4}\phantom{,4,4} & 1,\textcolor{red}{2}\phantom{,4,4} & 1,\textcolor{red}{1}\phantom{,4,4} & 1,\textcolor{red}{1},\textcolor{blue}{1},\textcolor{green}{1} & 1,\textcolor{red}{1},\textcolor{blue}{1},\textcolor{green}{1}& 0\\

\hline
\end{tabular}
\\[1mm]
\caption{The table of the mod $p$ Buchstaber invariants for $\Delta_{(k)}^m$; black for $p=2$, red for $p=3$, blue for $p=5$ and green for $p=7$. }\label{TabelBuchstaberp}
\end{table}
\end{center}





\section{Mod $p$ Buchtaber invariant of universal complexes $X(\F_p^n)$}

The first impression is that $s_p(\mathcal{K})<s_q(\mathcal{K})$, for $p<q$. For such a claim, it is necessary and sufficient that there exists a nondegenerate map $X(\F_p^n) \longrightarrow X(\F_q^n)$, for $p<q$. But this is not the case, as the next theorem shows.

\begin{thm}
\label{nonexistence}
  There are no nondegenerate map $f\colon X(\F_2^4)\to X(\F_3^4)$.
\end{thm}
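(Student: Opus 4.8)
The plan is to argue by contradiction: assume a nondegenerate $f\colon X(\F_2^4)\to X(\F_3^4)$ exists and extract from it an $\F_3$-linear configuration that cannot occur in characteristic $3$. First I would reduce $f$ to a normalized combinatorial object. Since any two distinct nonzero vectors of $\F_2^4$ are linearly independent, every pair of vertices of $X(\F_2^4)$ spans an edge; nondegeneracy then forces $f$ to be injective on vertices, and moreover $\{f(u),f(v)\}$ must be a $1$-simplex, so by Proposition~\ref{GCD} the $15$ images are pairwise non-proportional vectors of $\F_3^4$. Writing $e_1,\dots,e_4$ for the standard basis of $\F_2^4$, the set $\{e_1,\dots,e_4\}$ is a simplex, hence $\{f(e_1),\dots,f(e_4)\}$ is a basis of $\F_3^4$; composing $f$ with the nondegenerate automorphism of $X(\F_3^4)$ induced by a linear isomorphism of $\F_3^4$, I may assume $f(e_i)=\bar e_i$, the standard basis of $\F_3^4$, and then use the coordinatewise scaling freedom of Proposition~\ref{nondeg} to normalize the remaining images.

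Next I would record the linear constraints. For each of the six weight-two vectors the set $\{e_i+e_j\}\cup\{e_k: k\neq i\}$ is a simplex, so by Proposition~\ref{GCD} the vector $f(e_i+e_j)$ has nonzero $i$-th and $j$-th coordinates; analogous simplex conditions constrain the coordinate patterns of the four weight-three images and of $f(e_1+e_2+e_3+e_4)$. The content of nondegeneracy is then exactly that every $4$-subset of the $15$ images coming from an $\F_2$-basis has nonzero determinant over $\F_3$, a large over-determined system of minor-nonvanishing conditions. The mechanism I want to exploit is the failure of the Fano axiom in odd characteristic: inside any rank-$3$ subspace $\spn\{u,v,w\}\subset\F_2^4$ the seven nonzero vectors form a Fano plane, the complement of any of its lines is a $4$-arc $A,B,C,D$ with $A+B+C+D=0$, and the three diagonal sums $A+B,A+C,A+D$ are collinear in $X(\F_2^4)$ (their sum is $0$). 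If all seven of this plane's collinearities were preserved by $f$, the images would give a representation of the Fano plane over $\F_3$, which is impossible; hence in every Fano subplane at least one line must be \emph{broken}, i.e.\ must map to a linearly independent triple.

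The hard part, and the reason the statement is subtle, is that a single Fano subplane can in fact be realized in $X(\F_3^4)$ with only one broken line (one can place its seven images in a fixed $\F_3^3$ so that exactly six of the seven triples stay collinear), so no purely local obstruction suffices and the contradiction is genuinely global. To force it I would fix the line $\ell=\{e_1,e_2,e_1+e_2\}$ and use the structural fact that the three Fano subplanes of $X(\F_2^4)$ containing $\ell$ meet pairwise exactly in $\ell$ and together exhaust all $15$ vertices. The images $\bar e_1,\bar e_2,f(e_1+e_2)$ along $\ell$ are common to all three planes, so the already-fixed $\F_3$-arithmetic on $\ell$ interacts with the broken-line pattern that each of the three planes is independently forced to exhibit.

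Carrying out this bookkeeping is the main obstacle: tracking which lines are broken in each of the three overlapping planes converts the minor-nonvanishing conditions into a finite system in the $O(1)$ remaining free coordinates, and I expect the characteristic-$3$ relations forced along the shared line $\ell$ to be incompatible with the broken-line pattern required in all three planes at once. Equivalently, the task reduces to showing that the characteristic-$2$ linear dependencies among the $15$ vectors cannot be simultaneously lifted to characteristic $3$; I anticipate that, after the normalizations above, this collapses to a short but careful case analysis, and it is precisely here that the distinction between $p=2$ and $p=3$ is decisive.
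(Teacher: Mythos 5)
Your setup is sound and matches the paper's: injectivity follows because $X(\F_2^4)$ has a complete $1$-skeleton, the images $f(e_1),\dots,f(e_4)$ can be normalized to the standard basis of $\F_3^4$ via a linear automorphism, the scaling freedom of Proposition~\ref{nondeg} is available, and the support constraints you derive for $f(e_i+e_j)$ are exactly part (a) of the paper's Lemma~\ref{Propertiesf}. The Fano-plane framing is also correct as far as it goes: the Fano matroid is not representable in odd characteristic, so each of the fifteen rank-$3$ subplanes must have at least one ``broken'' line, and you are right that this is not a contradiction by itself because the non-Fano matroid \emph{is} representable over $\F_3$.

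The genuine gap is that the proof stops exactly where the theorem begins. Having correctly observed that no local obstruction exists, you propose to derive a global contradiction from the three Fano subplanes through a fixed line $\ell$, but you never carry this out: the decisive step is announced with ``I expect'' and ``I anticipate,'' and no incompatibility is actually exhibited. Nothing you have written rules out that the three planes through $\ell$ each break a line in a mutually consistent way, and establishing that they cannot is not a formality --- it is the entire content of the result. The paper's own proof, after the same normalizations, occupies four further lemmas of exhaustive casework: it fixes $v=(0,0,1,1)$, reduces $f(v)$ to finitely many candidates using Lemma~\ref{Propertiesf} and Proposition~\ref{nondeg}, and for each candidate (and each candidate value on auxiliary vertices such as $(1,1,0,0)$ and $(0,1,1,1)$) exhibits an explicit $3$-simplex of $X(\F_2^4)$ whose image fails to be a $3$-simplex. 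Your reorganization around the three planes through $\ell$ might well be a viable bookkeeping scheme, but there is no evidence in the proposal that it closes, nor that it avoids a case analysis of comparable size; as written, the argument establishes only the (true but insufficient) statement that every Fano subplane has a broken line, and the theorem remains unproved.
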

\begin{proof}
Suppose there exists a nondegenerate map $f\colon X(\F_2^4)\to X(\F_3^4)$. We may assume (\Cref{0 or 1}) that $f(v)=v$ for $v=e_1,e_2,e_3,e_4,e=(1,1,1,1)$. Regarding $f(e)$, if the $i$-th coordinate of $f(e)$ is 0 for some $i$, then $\{e,e_a,e_b,e_c\}$ is a 3 simplex, for $\{i,a,b,c\}=[4]$, but $f(\{e,e_a,e_b,e_c\})$ is not. This is not possible since $f$ is nondegenerate. Therefore, all coordinates of $f(e)$ are nonzero, and we apply Remark~\ref{0 or 1} for $r+1$.

For $v\in\F_p^4$ we denote by $v_i$ its $i$-th coordinate and define $I(v)=\{i\,|\,v_i\ne 0\}$.

\begin{lemma}\label{Propertiesf}
Let $v\in \F_2^4$.
\begin{itemize}
    \item[(a)] We have $I(v)\subset I(f(v))$.
    \item[(b)] If $I(v)\ne I(f(v))$ then $f(v)_i=1$ if and only if $i\in I(v)$ or $f(v)_i=2$ if and only if $i\in I(v)$.
    \item[(c)] If there are $i,j\in I(v)$ such that $f(v)_i\ne f(v)_j$ then $I(v)= I(f(v))$.
\end{itemize}
\end{lemma}

\begin{proof}
\begin{itemize}
    \item[(a)] Let $i\in I(v)$. Then $\{v,e_a,e_b,e_c\}$ is a 3-simplex in $X(\F_2^4)$, where$\{i,a,b,c\}=[4]$. If $f(v)_i=0$ then $f(\{v,e_a,e_b,e_c\})$, is not a 3-simplex in $X(\F_3^4)$. It is impossible since $f$ is a nondegenerate map, so $i\in I(f(v))$.
    \item[(b)] Suppose there exists $i\in I(f(v))\setminus I(v)$. If there is $j\in I(v)$ such that $f(v)_i=f(v)_j$, then $f(\{v,e,e_a,e_b\})$, where $\{i,j,a,b\}=[4]$, is not a 3-simplex. Therefore $f(v)_i\ne f(v)_j$ for all $j\in I(v)$. In particular $f(v)_j=f(v)_k\notin \{0,f(v)_i\}$, for all $j,k\in I(v)$. Moreover, $f(v)_j\in \{0, f(v)_i\}$, for all $j \notin I(v)$.
    \item[(c)] It follows from (2). \qedhere
\end{itemize}
\end{proof}

Let $v=(0,0,1,1)$. By the above lemma $f(v)_3\ne 0$, so by \Cref{nondeg} from now on we assume that $f(v)_3=1$. Also by the lemma we have that $I(f(v))=I(v)=\{3,4\}$ or $f(v)$ equals $(2,0,1,1)$, $(0,2,1,1)$ or $(2,2,1,1)$.
In the following four lemmas, we show that neither case is valid, thus proving the theorem.
\end{proof}

\begin{lemma}\label{Propertyf11}
The last two coordinates of $f(v)$ are equal and nonzero.
\end{lemma}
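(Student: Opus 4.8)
The plan is to argue by contradiction. By Lemma~\ref{Propertiesf}(a) we have $\{3,4\}=I(v)\subset I(f(v))$, so $f(v)_3,f(v)_4\ne 0$; together with the normalization $f(v)_3=1$ this already gives the ``nonzero'' half for free. Among the five surviving candidates $(0,0,1,1),(0,0,1,2),(2,0,1,1),(0,2,1,1),(2,2,1,1)$ listed before the lemma, the only one with $f(v)_3\ne f(v)_4$ is $(0,0,1,2)$, so proving ``equal'' is exactly the same as excluding $f(v)=(0,0,1,2)$. I would therefore assume $f(v)=(0,0,1,2)$ and look for a linearly dependent image of a simplex, contradicting Proposition~\ref{GCD}.

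First I would extract rigid data from the two weight-three vertices $p=(1,1,1,0)$ and $q=(1,1,0,1)$, which over $\F_2$ satisfy $p+q=v$, $p+e_4=e$ and $q+e_3=e$. Each of $\{p,e,e_1,e_2\}$, $\{p,e,e_1,e_3\}$, $\{p,e,e_2,e_3\}$ is a basis of $\F_2^4$, and since $\spn_{\F_3}\{f(e),e_a,e_b\}$ is a coordinate hyperplane of the shape $\{y:y_c=y_4\}$, nondegeneracy forces $f(p)_1,f(p)_2,f(p)_3$ each to differ from $f(p)_4$. Projecting the basis $\{v,p,e_1,e_2\}$ onto the last two coordinates sends $f(v)\mapsto(1,2)$, so independence additionally forces $(f(p)_3,f(p)_4)$ not to be proportional to $(1,2)$, i.e.\ $f(p)_4\ne 2f(p)_3$. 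Combined with $\{1,2,3\}=I(p)\subset I(f(p))$, a short check rules out $f(p)_4\in\{1,2\}$ (either value would make $f(p)_1,f(p)_2,f(p)_3$ all equal to the opposite nonzero element and then $f(p)_4=2f(p)_3$), leaving $f(p)_4=0$. The same computation with the roles of the third and fourth coordinates interchanged gives $f(q)_3=0$. This is precisely the place where the ``ratio $2$'' of $f(v)=(0,0,1,2)$ does the work; in the benign case $f(v)=(0,0,1,1)$ the analogous constraints are vacuous.

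Next I would propagate these support patterns. I would bring in the weight-two vertex $u=(1,1,0,0)$ (with $u+v=e$ and $u=p+e_3=q+e_4$) and the remaining weight-two and weight-three vertices, applying Lemma~\ref{Propertiesf}(a)--(c) at each one: part (a) pins the support, and part (b) forces the on-support values to be constant the moment the support grows, so only finitely many patterns survive at every vertex. The objective is to assemble from the constrained images a basis of $\F_2^4$ whose image matrix has rank $<4$ over $\F_3$, which by Proposition~\ref{GCD} contradicts nondegeneracy of $f$.

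The main obstacle is that the contradiction is genuinely non-local. The images already fixed on $e_1,\dots,e_4,e,v$, even together with the forced patterns for $p,q,u$, remain jointly consistent: the identity-like assignment $f(p)=(1,1,1,0)$, $f(q)=(1,1,0,1)$, $f(u)=(1,1,0,0)$ meets every basis constraint supported on these nine vertices, so no basis drawn from them witnesses the failure. Thus the incompatibility can only surface after enough of the remaining six vertices $(1,0,1,0),(0,1,1,0),(1,0,0,1),(0,1,0,1),(1,0,1,1),(0,1,1,1)$ are pinned down as well. The delicate part of the proof will therefore be organizing this propagation so that Lemma~\ref{Propertiesf} collapses the a priori large branching into a manageable, terminating case analysis that ends in a rank drop.
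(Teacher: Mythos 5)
Your reduction is set up correctly: with the normalizations in force, the five candidates for $f(v)$ are $(0,0,1,1),(0,0,1,2),(2,0,1,1),(0,2,1,1),(2,2,1,1)$, so the lemma is exactly the exclusion of $(0,0,1,2)$, and this is also where the paper starts (it assumes $f(v)=(0,0,1,2)$ after rescaling by Proposition~\ref{nondeg}). Your opening computation is sound and even a little slicker than the paper's at this spot: identifying $\spn_{\F_3}\{e,e_a,e_b\}$ with the hyperplane $\{y:y_c=y_4\}$ forces $f(p)_c\ne f(p)_4$ for $c=1,2,3$, and the basis $\{v,p,e_1,e_2\}$ gives $f(p)_4\ne 2f(p)_3$, whence $f(p)_4=0$ and symmetrically $f(q)_3=0$. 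This recovers, by a clean determinant argument, the step ``$f(u)_3=0$'' inside the paper's first claim.

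But there is a genuine gap: the proof stops exactly where the real work begins, and you say so yourself. You never exhibit a basis of $\F_2^4$ whose image degenerates; you only state that one must exist and that the ``delicate part'' is organizing the propagation. That propagation \emph{is} the paper's proof: it first upgrades the kind of partial information you obtained to the full claim $f(u)\in\{u,2u\}$ for all six vertices of the form $(x,y,1,0)$, $(x,y,0,1)$ with $(x,y)\ne(0,0)$ --- note your conclusion $f(p)_4=0$ still admits, say, $f(p)=(1,2,1,0)$, and excluding such values requires further simplices like $\{u,v,e,e_1\}$ and $\{u,v,e,e_2\}$ --- then normalizes $f(u)=u$ on those six vertices via Proposition~\ref{nondeg}, and finally runs a $5\times 5$ case analysis over the possibilities for $f(u)$, $u=(1,1,0,0)$, and $f(w)$, $w=(0,1,1,1)$, producing for each pair an explicit 3-simplex (involving the already-pinned vertices such as $e_1+e_4$, $e_2+e_3$, $e+e_3$) with degenerate image. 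Your own (correct) observation that the identity-like assignment is consistent on the nine vertices $e_1,\ldots,e_4,e,v,p,q,u$ shows the obstruction is non-local, so the omitted analysis cannot be dismissed as routine verification: without it, or an equivalent terminating argument, no contradiction has been derived and the lemma remains unproved.
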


\begin{proof}
By the previous lemma, we have that the last two coordinates of $f(v)$ are nonzero. Suppose that they are not equal.
By \Cref{nondeg} we may assume that $f(v)=(0,0,1,2)$.

First we prove that for $u=(x,y,0,1)$ and $u=(x,y,1,0)$ we have $f(u)=u$ or $f(u)=2u$. We consider the case $u=(x,y,0,1)$, and the proof for the other case is the same. Suppose $f(u)\notin \{u,2u\}$. By the assumption, at least one of $x$ and $y$ is nonzero. We may assume that $y=1$, that is $u=(x,1,0,1)$.

If $f(u)_3\ne0$, by \Cref{Propertiesf} we have $f(u)=(x',1,2,1)$ or $(x',2,1,2)$. In both cases $f(\{v,u,e_1,e_2\})$ is not a 3-simplex, while $\{v,u,e_1,e_2\}$ is a 3-simplex, which is a contradiction. So $f(u)_3=0$.

If $x=0$, then by \Cref{Propertiesf} we have $f(u)\in \{(2,1,0,1),(1,2,0,2), (0,1,0,1), (0,2,0,2)\}$. In first two cases $f(\{v,u,e,e_2\})$ is not a 3-simplex, while $\{v,u,e,e_2\}$ is. So $f(u)=u$ or $f(u)=2u$.

For $x=1$ (and $f(u)_3=0$), by \Cref{Propertiesf}, there are eight possibilities for $f(u)$. If $f(u)$ is $(1,1,0,2)$, $(2,2,0,1)$, $(1,2,0,1)$ or $(2,1,0,2)$ then $f(\{u,v,e,e_1\})$ is not a 3-simplex, while $\{u,v,e,e_1\}$ is. If $f(u)=(2,1,0,1)$ or $(1,2,0,2)$ then $f(\{u,v,e,e_2\})$ is not a 3-simplex, while $\{u,v,e,e_2\}$ is. Hence $f(u)=u$ or $f(u)=2u$. So we may assume (Proposition~\ref{nondeg}) that $f(u)=u$ for $u=(1,0,1,0)$, $(0,1,1,0)$, $(1,1,1,0)$, $(1,0,0,1)$, $(0,1,0,1)$, and $(1,1,0,1)$.

Now we consider $u=(1,1,0,0)$. From Proposition~\ref{nondeg} and Lemma~\ref{Propertiesf} we get that there are five essentially different possibilities for $f(u)$: $(1,1,0,0)$, $(1,2,0,0)$, $(1,1,2,0)$, $(1,1,0,2)$, $(1,1,2,2)$, and for $w=(0,1,1,1)$ we also five essentially different possibilities for $f(w)$: $(0,1,1,1)$, $(0,1,1,2)$, $(0,1,2,1)$, $(0,2,1,1)$, $(1,2,2,2)$.

Suppose $f(u)=(1,1,0,0)$. If $f(w)=(0,1,1,1)$, then $f(\{w,u,v,e_1+e_4\})$ is not a 3-simplex. If $f(w)=(0,1,1,2)$, then $f(\{w,u,e,e_2+e_4\})$ is not a 3-simplex. If $f(w)=(0,1,2,1)$, then $f(\{w,u,e,e_2+e_3\})$ is not a 3-simplex. If $f(w)=(0,2,1,1)$, then $f(\{w,u,v,e_2+e_4\})$ is not a 3-simplex. If $f(w)=(1,2,2,2)$, then $f(\{w,u,v,e_2+e_4\})$ is not a 3-simplex.

Suppose $f(u)=(1,2,0,0)$. If $f(w)=(0,1,1,1)$, then $f(\{w,u,v,e+e_3\})$ is not a 3-simplex. If $f(w)=(0,1,1,2)$, then $f(\{w,u,e,e_3\})$ is not a 3-simplex. If $f(w)=(0,1,2,1)$, then $f(\{w,u,e,e_4\})$ is not a 3-simplex. If $f(w)=(0,2,1,1)$, then $f(\{w,v,e_1,e+e_3\})$ is not a 3-simplex. If $f(w)=(1,2,2,2)$, then $f(\{w,u,v,e_4\})$ is not a 3-simplex.

Suppose $f(u)=(1,1,0,2)$. If $f(w)=(0,1,1,1)$, then $f(\{w,u,e_2,e_1+e_3\})$ is not a 3-simplex. If $f(w)=(0,1,1,2)$, then $f(\{w,u,e_1,e_3\})$ is not a 3-simplex. If $f(w)=(0,1,2,1)$, then $f(\{w,u,e_1,e_2+e_3\})$ is not a 3-simplex. If $f(w)=(0,2,1,1)$, then $f(\{w,u,e_1,e_3\})$ is not a 3-simplex. If $f(w)=(1,2,2,2)$, then $f(\{w,u,e_2,e_3\})$ is not a 3-simplex.

Suppose $f(u)=(1,1,2,0)$. If $f(w)=(0,1,1,1)$, then $f(\{w,u,e_2,e_1+e_4\})$ is not a 3-simplex. If $f(w)=(0,1,1,2)$, then $f(\{w,u,e_1,e_2+e_4\})$ is not a 3-simplex.  If $f(w)=(0,1,2,1)$, then $f(\{w,u,e_1,e_4\})$ is not a 3-simplex. If $f(w)=(0,2,1,1)$, then $f(\{w,u,e_1,e_4\})$ is not a 3-simplex. If $f(w)=(1,2,2,2)$, then $f(\{w,u,e_2,e_4\})$ is not a 3-simplex.

If $f(u)=(1,1,2,2)$, then $f(\{u,v,e_2,e_1+e_4\})$ is not a 3-simplex.
\end{proof}

\begin{lemma}
Let $v=(0,0,1,1)$, then $f(v)\ne (2,0,1,1)$ and $f(v)\ne (0,2,1,1)$.
\end{lemma}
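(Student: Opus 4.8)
The plan is to prove the two inequalities simultaneously by a symmetry and then to rule out the surviving value by the same kind of simplex-chasing used in \Cref{Propertyf11}. Let $\sigma$ be the linear involution of $\F_3^4$ (and simultaneously of $\F_2^4$) interchanging the first two coordinates; it is a simplicial automorphism of both universal complexes, so $g\mapsto \sigma\circ g\circ\sigma$ sends nondegenerate maps to nondegenerate maps. Since $\sigma$ only permutes $e_1$ and $e_2$ and fixes $e_3,e_4,e$ and $v=(0,0,1,1)$, the conjugate $\sigma\circ f\circ\sigma$ is again normalised exactly as $f$ is (including $(\sigma\circ f\circ\sigma)(v)_3=1$), while it exchanges the two candidate values $(2,0,1,1)\leftrightarrow(0,2,1,1)$ of $f(v)$. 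Thus a contradiction derived from $f(v)=(2,0,1,1)$ immediately yields one for $f(v)=(0,2,1,1)$, and it suffices to treat the single case $f(v)=(2,0,1,1)$.

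First I would propagate the value $f(v)=(2,0,1,1)$ to the neighbours of $v$. As in \Cref{Propertyf11}, for each vertex $u$ whose support meets $\{3,4\}$ in a single index I would write down the $3$-simplices of $X(\F_2^4)$ of the form $\{v,u,e_i,e_j\}$ and $\{e,u,e_i,e_j\}$ and require their images to have rank $4$ (Proposition~\ref{GCD}); combined with the support and pattern constraints of \Cref{Propertiesf}, this pins $f(u)$ down to a single vector up to the global scaling allowed by \Cref{nondeg}. After rescaling I may assume $f$ is the identity on a spanning family of such vertices. The contradiction is then produced in the standard way: I exhibit a basis (or a triple) of $\F_2^4$ whose $f$-image is $\F_3$-linearly dependent, i.e.\ a simplex of $X(\F_2^4)$ whose image is not a simplex of $X(\F_3^4)$, contradicting nondegeneracy.

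The main obstacle is the combinatorial size of the case analysis: a few pivotal vertices --- such as $(1,1,0,0)$, $(1,0,1,1)$ and $(0,1,1,1)$ --- still admit several admissible images after the propagation step, and each must be excluded by an explicit witnessing simplex, exactly as the grid of cases is handled in \Cref{Propertyf11}. To keep this manageable I would exploit the residual symmetries that survive the choice $f(v)=(2,0,1,1)$: the transposition of the coordinates $3$ and $4$ fixes $v$, $e$, every $e_i$ and the value $(2,0,1,1)$, and the coordinate rescalings of \Cref{nondeg} remain available; together these collapse the symmetric cases and leave only a handful of genuinely distinct configurations to rule out by hand. The delicate point, as before, is to make the right choice of the completing vectors $e_i,e_j$ (or of vectors such as $e_1+e_3$) so that the $2$ appearing in the first coordinate of $f(v)$ forces an $\F_3$-dependency that is invisible over $\F_2$.
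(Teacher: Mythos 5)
Your symmetry reduction is correct and is exactly the paper's first step: conjugating by the transposition of the first two coordinates preserves the normalisation $f(e_i)=e_i$, $f(e)=e$, $f(v)_3=1$, and swaps the two candidate values, so it suffices to exclude $f(v)=(2,0,1,1)$. The overall strategy you describe --- propagate constraints to neighbouring vertices using \Cref{Propertiesf}, the rescaling freedom of \Cref{nondeg}, and the analogue of \Cref{Propertyf11}, then exhibit a $3$-simplex of $X(\F_2^4)$ whose image fails the rank criterion of Proposition~\ref{GCD} --- is also the paper's strategy.

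However, what you have written is a plan, not a proof: the entire mathematical content of this lemma lies in the explicit witnessing simplices, and you never produce a single one. Phrases such as ``this pins $f(u)$ down to a single vector'' and ``the contradiction is then produced in the standard way'' assert exactly what needs to be verified. For comparison, the paper's argument needs only two pivotal vertices: for $u=(1,1,0,0)$ the admissible images reduce to $(1,1,0,0)$, $(1,1,2,0)$, $(1,1,0,2)$, $(1,1,2,2)$, and the latter three are killed by the simplices $\{u,v,e_2,e_4\}$ and $\{u,v,e_2,e_3\}$; then for $u=(0,1,0,1)$ \emph{every} admissible image $(0,1,0,1)$, $(2,1,0,1)$, $(0,1,2,1)$, $(2,1,2,1)$ is killed (by $\{u,v,e_3,e_1+e_2\}$, $\{u,v,e,e_4\}$, $\{u,v,e_4,e_1+e_2\}$), which is the contradiction. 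Note that the shape of the endgame is that \emph{no} value of $f(u)$ survives for the second vertex, rather than, as you suggest, that each vertex gets pinned to a unique value and a final dependency appears afterwards. Your choice of pivotal vertices $(1,0,1,1)$ and $(0,1,1,1)$ may or may not lead to a contradiction as quickly; since you verify nothing, the argument as it stands has a genuine gap and cannot be accepted without the explicit case analysis.
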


\begin{proof}
By symmetry, it is enough to show that $f(v)\ne (2,0,1,1)$. Assume that $f(v)= (2,0,1,1)$.

Let $u=(1,1,0,0)$.  By \Cref{nondeg}, \Cref{Propertiesf}, and \Cref{Propertyf11}  we may assume that $f(u)\in$ $\{(1,1,2,0)$, $(1,1,0,2)$, $(1,1,2,2)$, $(1,1,0,0)\}$.
If $f(u)=(1,1,2,0)$ or $f(u)=(1,1,2,2)$, then $f(\{u,v,e_2,e_4\})$ is not a 3-simplex. If $f(u)=(1,1,0,2)$, then $f(\{u,v,e_2,e_3\})$ is not a 3-simplex. Hence $f(u)=(1,1,0,0)$.

Let $u=(0,1,0,1)$. Using \Cref{nondeg}, \Cref{Propertiesf}, and \Cref{Propertyf11} we have $f(u) \in$ $\{(0,1,0,1)$, $(2,1,0,1)$, $(0,1,2,1)$, $(2,1,2,1)\}$. If $f(u)=(0,1,0,1)$ then $f(\{u,v,e_3,e_1+e_2\})$ is not a 3-simplex. If $f(u)=(0,1,2,1)$ or $f(u)=(2,1,0,1)$ then $f(\{u,v,e,e_4\})$ is not a 3-simplex. If $f(u)=(2,1,2,1)$ then $f(\{u,v,e_4,e_1+e_2\})$ is not a 3-simplex. Therefore, $f(u)$ cannot be defined so that $f$ is nondegenerate, and we proved that the assumption is wrong.
\end{proof}

\begin{lemma}
Let $v=(0,0,1,1)$, then $f(v)\ne (2,2,1,1)$.
\end{lemma}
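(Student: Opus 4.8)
The plan is to argue by contradiction: assume $f(v)=(2,2,1,1)$ for $v=(0,0,1,1)$ and show that some vertex of $X(\F_2^4)$ can have no admissible image, exactly as in the two preceding lemmas. Throughout I would use three reductions: the scaling freedom of \Cref{nondeg}, the support/value constraints of \Cref{Propertiesf}, and the coordinate-permutation symmetry of the normalization $f(e_i)=e_i$, $f(e)=(1,1,1,1)$. The last one lets me transport \Cref{Propertyf11} to every weight-two vertex $w$: conjugating $f$ by a permutation sending $\{3,4\}$ to the support of $w$ shows that the two nonzero coordinates of $f(w)$ are equal and nonzero. I would also record two symmetries of the value $(2,2,1,1)$: it is fixed by the coordinate swaps $1\leftrightarrow 2$ and $3\leftrightarrow 4$, and it equals $2\cdot(1,1,2,2)$; both will be used to shorten the case analysis.

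First I would pin down $f(u)$ for $u=(1,1,0,0)$. By the weight-two form of \Cref{Propertyf11} the first two coordinates of $f(u)$ are equal and nonzero, so after scaling (\Cref{nondeg}) they are both $1$; \Cref{Propertiesf} then forces the remaining coordinates into $\{0,2\}$, leaving the candidates $(1,1,0,0)$, $(1,1,2,0)$, $(1,1,0,2)$, $(1,1,2,2)$. The value $(1,1,2,2)=2f(v)$ is eliminated at once, since then the edge $\{u,v\}$ --- a genuine $1$-simplex of $X(\F_2^4)$ --- would map to two parallel vectors. For $(1,1,2,0)$ the $3$-simplex $\{u,v,e_2,e_4\}$ maps to a matrix whose determinant reduces to $0$ modulo $3$, and the $3\leftrightarrow 4$ symmetry disposes of $(1,1,0,2)$ via $\{u,v,e_2,e_3\}$. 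Hence $f(u)=(1,1,0,0)$ is forced.

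With $f(u)=(1,1,0,0)$ in hand I would introduce a second weight-two vertex, say $u'=(0,1,0,1)$, narrow its image to $(0,1,0,1)$, $(2,1,0,1)$, $(0,1,2,1)$, $(2,1,2,1)$ by the same three tools, and for each surviving candidate exhibit a $3$-simplex built from $u,v,u'$ and the now-determined vertices (the standard $e_i$, the all-ones vertex, and sums such as $e_1+e_2=u$) whose image is $\F_3$-dependent. Ruling out every candidate leaves no admissible value for $f(u')$, which is the desired contradiction.

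The main obstacle is locating these witnessing simplices. Because $(2,2,1,1)$ is far more symmetric than the value $(2,0,1,1)$ handled in the previous lemma, the three reduction tools leave more candidates alive, and --- more seriously --- most $\F_3$-linear relations one writes among the images (for instance $f(v)=2f(u)+f(e_3)+f(e_4)$) are merely shadows of $\F_2$-relations among the preimages, hence correspond to non-simplices and are useless. The genuine content is therefore to find, for each stubborn candidate, a quadruple of vertices that is independent over $\F_2$ yet dependent over $\F_3$, which may require first propagating $f$ to one or two further vertices; the accompanying $4\times 4$ determinant evaluations modulo $3$ are the routine but lengthy core of the argument.
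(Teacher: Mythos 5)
Your first stage is sound and its computations check out: after transporting \Cref{Propertyf11} and \Cref{Propertiesf} to $u=(1,1,0,0)$ and normalizing via \Cref{nondeg}, the candidates for $f(u)$ are $(1,1,0,0)$, $(1,1,2,0)$, $(1,1,0,2)$, $(1,1,2,2)$, and your witnesses $\{u,v,e_2,e_4\}$ and $\{u,v,e_2,e_3\}$ do kill the middle two (the relevant determinants are $\pm3\equiv0$). But here you and the paper part ways decisively. The paper invokes the companion lemmas for weight-two vertices to conclude $f(u)=(1,1,2,2)$ and is then done in one line, since $(1,1,2,2)=2f(v)$ makes the edge $\{u,v\}$ degenerate. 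You instead \emph{spend} the parallelism observation to discard $(1,1,2,2)$, which strands you in the case $f(u)=(1,1,0,0)$ --- the one case the paper never enters --- so that everything rests on your unexecuted second stage.

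That second stage is where the genuine gap lies. With the determined images $f(e_i)=e_i$, $f(e)=(1,1,1,1)$, $f(v)=(2,2,1,1)$, $f(u)=(1,1,0,0)$, your candidates for $f(u')$, $u'=(0,1,0,1)$, are $(x,1,y,1)$ with $x,y\in\{0,2\}$. The simplex $\{u',e_1,e_4,v\}$ does eliminate the two candidates with $y=2$ (because $2f(v)-e_1-e_4=(0,1,2,1)$), but for $y=0$ --- in particular for $f(u')=(0,1,0,1)=u'$ itself --- every $3$-simplex formed by $u'$ and three of the seven determined vertices has $\F_3$-independent image: the spans that actually occur are of the shapes $\{(a,b,c,c)\}$, $\{(a,a,b,c)\}$, $\{(a,b,c,0)\}$, and so on, none of which contains $(0,1,0,1)$ or $(2,1,0,1)$, while the triples whose span \emph{would} catch them (such as $\{e_2,e_4,\cdot\}$, $\{e_1,e_3,e\}$, $\{e_1,e_4,u\}$) are $\F_2$-dependent together with $u'$ and hence not simplices. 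So the contradiction you promise cannot be extracted from ``$u,v,u'$ and the now-determined vertices''; you would have to propagate $f$ to further weight-two vertices and branch over their candidate images, and you offer no argument that this terminates in a contradiction. As written, the proof does not close. The efficient repair is the paper's ordering of ideas: first establish, by the symmetry-transported companion lemmas (including the one ruling out $f(w)=w$ for a weight-two vertex $w$), that $f(u)=(1,1,2,2)$ is forced, and only then play the parallelism of $f(u)$ with $f(v)$.
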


\begin{proof}
Assume that $f(v)= (2,2,1,1)$. Let $u=(1,1,0,0)$. By previous lemmas and \Cref{nondeg} we have $f(u)=(1,1,2,2)$. But $f(\{u,v\})$ is not a 1-simplex which is a contradiction.
\end{proof}

\begin{lemma}
Let $v=(0,0,1,1)$, then $f(v)\ne (0,0,1,1)$.
\end{lemma}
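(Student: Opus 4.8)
The plan is to assume, for contradiction, that $f(v)=(0,0,1,1)$ and then to exhibit a simplex of $X(\F_2^4)$ whose image fails to be a simplex of $X(\F_3^4)$. The engine of the argument is the following elementary observation over $\F_3$: the four weight-three vectors
\[
c_1=(1,1,1,0),\quad c_2=(1,1,0,1),\quad c_3=(1,0,1,1),\quad c_4=(0,1,1,1)
\]
are linearly independent over $\F_2$ (a short computation shows $ac_1+bc_2+cc_3+dc_4=0$ forces $a=b=c=d=0$), so $\{c_1,c_2,c_3,c_4\}$ is a $3$-simplex of $X(\F_2^4)$; yet over $\F_3$ they satisfy $c_1+c_2+c_3+c_4=(3,3,3,3)=0$, so their $\F_3$-span has rank only $3$. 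Consequently, if $f$ sends each $c_i$ to a scalar multiple of itself, then $\{f(c_1),\dots,f(c_4)\}$ spans the same rank-$3$ subspace and cannot be a $3$-simplex, contradicting nondegeneracy. It therefore suffices to force $f(c_i)\in\{c_i,2c_i\}$ for every $i$.

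To this end I would first record, via \Cref{Propertiesf} together with \Cref{nondeg}, the admissible values of $f$ on the relevant auxiliary vertices. For a weight-two vertex such as $u=(1,1,0,0)$ one has $I(u)=\{1,2\}\subseteq I(f(u))$, and after rescaling so that $f(u)_1=1$ the options collapse to $f(u)\in\{(1,1,0,0),(1,2,0,0),(1,1,2,0),(1,1,0,2),(1,1,2,2)\}$; the transposition $3\leftrightarrow4$ fixes $v,e,u$ and identifies the third and fourth entries, while $1\leftrightarrow2$ acts trivially up to scaling, so only four cases are essentially distinct. For a weight-three vertex $c_i$ the same two lemmas confine $f(c_i)$, up to the scaling of \Cref{nondeg}, to finitely many types: the scalar type $\{c_i,2c_i\}$, together with types having either two distinct nonzero values on the support $I(c_i)$ or a strictly enlarged support. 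I then eliminate every non-scalar type by locating, for each, a genuine $3$-simplex of $X(\F_2^4)$ through the vertex in question whose $f$-image is $\F_3$-dependent; the relations $c_1+c_2=v$ and $c_3+c_4=u$, together with the fixed values $f(e_i)=e_i$, $f(e)=e$, and $f(v)=(0,0,1,1)$, supply the witnessing simplices.

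The hard part, and the reason this case is longer than the preceding ones, is that the coordinate \emph{values} of the unknown images cannot be pinned down using the known vertices alone. A simplex of the form $\{x,v,e_a,e_b\}$ only probes the support of $f(x)$, since the $\F_3$-span of $\{v,e_a,e_b\}$ is a coordinate hyperplane; and a simplex $\{x,e,e_a,e_b\}$ yields only the skew test $f(x)_c=f(x)_d$ on the complementary pair $\{c,d\}$, which for the vertices at hand always involves a coordinate outside the support of $f(v)$ and so never fires. Because $v=(0,0,1,1)$ already ties coordinates $3$ and $4$ together, any subspace that would separate those two coordinates of $f(c_i)$ already contains $c_i$ itself, so no admissible simplex is available for a direct test. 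The distinctions in coordinates $3$ and $4$ must therefore be forced indirectly, by introducing a second unknown vertex and cross-constraining its image against that of the first --- exactly the bookkeeping carried out in the proof of \Cref{Propertyf11}. Performing this for each residual configuration eliminates every non-scalar possibility, so $f(c_i)\in\{c_i,2c_i\}$ for all $i$, the weight-three $3$-simplex above acquires a degenerate image, and the contradiction follows. Together with the preceding lemmas this exhausts the four possibilities for $f(v)$ and hence proves that no nondegenerate map $f\colon X(\F_2^4)\to X(\F_3^4)$ exists.
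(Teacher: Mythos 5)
Your witness configuration is correct and rather elegant: the four weight\-/three vectors $c_1,\dots,c_4$ have integer determinant $-3$, so they form a $3$-simplex of $X(\F_2^4)$ while satisfying $c_1+c_2+c_3+c_4=0$ over $\F_3$; hence if every $f(c_i)$ were a scalar multiple of $c_i$, the image would indeed be degenerate. The problem is that the proof stops exactly where the work begins. Everything hinges on showing $f(c_i)\in\{c_i,2c_i\}$ for each $i$, and you do not establish this: you correctly flag it as ``the hard part'' and then defer it to unspecified bookkeeping ``in the style of \Cref{Propertyf11}.'' That step is the entire substance of the lemma, not a routine verification. For a weight-three vertex, \Cref{Propertiesf} and \Cref{nondeg} still leave five essentially different images (one with enlarged support, four with support $I(c_i)$ and mixed values $1$ and $2$), and eliminating the non-scalar ones requires exhibiting, for each, a concrete $\F_2$-independent quadruple whose image becomes $\F_3$-dependent. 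You name none, and --- as you yourself observe --- simplices built only from the $e_i$, $e$, $v$ and the weight-two vertices cannot separate the coordinates inside $I(c_i)$, so it is not even clear a priori that the required witnesses exist. As written, this is a plan with its central step missing.

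For comparison, the paper attacks the case $f(v)=v$ from the opposite end. Having established (by the earlier lemmas, applied to every coordinate pair by symmetry) that $f(e_i+e_j)\in\{e_i+e_j,\,2(e_i+e_j)\}$, it takes the single vertex $u=(0,1,1,1)$ and the single $3$-simplex $\{e_1+e_2,\,e_1+e_3,\,e_1+e_4,\,u\}$ of $X(\F_2^4)$, exploiting the relation $(e_1+e_2)+(e_1+e_3)+(e_1+e_4)=u$ in $\F_3^4$, and then checks the five admissible values of $f(u)$ against it. Note that this particular relation detects precisely the \emph{scalar} value $f(u)\in\{u,2u\}$ --- the very case you propose to reduce to --- which underlines that your intermediate target cannot be isolated from the rest of the case analysis: one has to dispose of all five admissible images of a well-chosen vertex, each with its own witnessing simplex. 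Until those witnesses are supplied, the argument has a genuine gap.
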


\begin{proof}
Assume that $f(v)= (0,0,1,1)$. Let $u=(0,1,1,1)$.  By \Cref{nondeg} and \Cref{Propertiesf} we may assume that $f(u)\in$ $\{(2,1,1,1)$, $(0,1,1,1)$, $(0,2,1,1)$, $(0,1,2,1)$, $(0,2,2,1)\}$. In all possible cases $\{e_1+e_4,e_1+e_3,e_1+e_2,u\}$ is a 3-simplex but
$f(\{e_1+e_4,e_1+e_3,e_1+e_2,u\})$ is not a 3-simplex.
\end{proof}

Now we are able to calculate $s_p(X(\F_2^4))$ for all primes $p$.

\begin{prop} Let $p$ be a prime number. Then
\[  s_p(X(\F_2^4))=11\text{ for } p\neq 3,\]
\[ s_3(X(\F_2^4))=10.\]
\end{prop}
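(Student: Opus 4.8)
The plan is to determine, for each prime $p$, the least $n$ for which a nondegenerate map $X(\F_2^4)\to X(\F_p^n)$ exists, and then read off $s_p(X(\F_2^4))=15-n$ (note $X(\F_2^4)$ has $2^4-1=15$ vertices). First I would record the two soft bounds. A nondegenerate simplicial map preserves the dimension of every simplex, and $\dim X(\F_2^4)=3$ while $\dim X(\F_p^n)=n-1$; hence one needs $n\ge 4$, giving $s_p(X(\F_2^4))\le 11$ for every prime. For $p=3$, \Cref{nonexistence} excludes $n=4$, so $n\ge 5$ and $s_3(X(\F_2^4))\le 10$. It remains to produce matching constructions.

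The key reduction is that it suffices to control bases: every unimodular subset of $\F_2^4$ extends to a basis, so a simplicial map is nondegenerate as soon as it sends each basis of $\F_2^4$ to a linearly independent set, which by Proposition~\ref{GCD} is a rank condition on the matrix of images. I would use the tautological lift $\phi\colon\F_2^4\to\F_p^4$ replacing each $\F_2$-entry by $0,1\in\F_p$. If $B$ is a basis, its $\{0,1\}$-matrix $M_B$ satisfies $\det M_B=1$ over $\F_2$, so its determinant over $\Z$ is odd; since the maximal determinant of a $4\times 4$ matrix with entries in $\{0,1\}$ equals $3$, this forces
\[
\det M_B\in\{\pm 1,\pm 3\}\qquad(\text{determinant over }\Z).
\]
For $p\ne 3$ both values are invertible mod $p$, so $\phi$ maps every basis to an independent set and is a nondegenerate map $X(\F_2^4)\to X(\F_p^4)$. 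Thus $n=4$ is attained and $s_p(X(\F_2^4))=11$.

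For $p=3$ I would upgrade $\phi$ by one constant coordinate, setting $g(v)=(\phi(v),1)\in\F_3^5$. A basis $B$ with $\det M_B=\pm1$ already has $\rank_{\F_3}M_B=4$. The remaining ``bad'' bases have $\det M_B=\pm 3$; by the Smith normal form their invariant factors are $1,1,1,3$, so $M_B$ has $\F_3$-rank exactly $3$ and a column relation $\sum_i\lambda_i\phi(v_i)=0$ unique up to scale. No $\lambda_i$ vanishes: a vanishing coefficient would make three of the $\phi(v_i)$ dependent over $\F_3$, yet the $\F_2$-independence of that triple forces some $3\times 3$ $\{0,1\}$-minor to be odd and of absolute value at most $2$, hence $\pm1$, nonzero mod $3$. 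The decisive point is that $\lambda$ cannot have exactly two coefficients equal to $2$: such a relation reads $v_a+v_b\equiv v_c+v_d\pmod 3$ with $\{0,1\}$-entries, which forces $v_a+v_b=v_c+v_d$ over $\Z$ and hence the $\F_2$-dependence $v_a+v_b+v_c+v_d=0$, contradicting that $B$ is a basis. Consequently $\sum_i\lambda_i\not\equiv 0\pmod 3$, so the constant row is not annihilated by $\lambda$; the augmented $5\times 4$ matrix then has only the trivial column relation and so has rank $4$. Therefore $g$ is nondegenerate, $n=5$ is attained, and $s_3(X(\F_2^4))=10$.

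The main obstacle is the $p=3$ construction, specifically the verification that a single extra coordinate repairs all bad bases simultaneously. Everything hinges on the arithmetic of the relation vector $\lambda$: ruling out the ``two $2$'s'' pattern via the integer lift is exactly what guarantees $\sum_i\lambda_i$ is a nonzero element of $\F_3$, which is what lets the one constant coordinate do the job uniformly.
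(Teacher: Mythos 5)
Your proof is correct. The frame is the same as the paper's: the upper bound $11$ comes from $\dim X(\F_2^4)=3\le \dim X(\F_p^n)=n-1$, the bound $s_3\le 10$ from Theorem~\ref{nonexistence}, and for $p\ne 3$ both you and the paper use the tautological $\{0,1\}$-lift together with the Hadamard bound that a $4\times4$ zero--one basis matrix has odd integer determinant of absolute value at most $3$, hence determinant $\pm1$ or $\pm3$, invertible mod $p$. Where you genuinely diverge is the $p=3$ construction. The paper appends a fifth coordinate equal to the indicator of the weight-three vectors ($f(v)=v+e_5$ exactly when $|I(v)|=3$) and justifies nondegeneracy by appealing to an explicit classification, cited from \cite{ZIVKOVIC2006310}, of which zero--one bases have determinant $\pm3$, leaving the check to the reader. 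You instead append a \emph{constant} coordinate $1$ and verify nondegeneracy intrinsically: Smith normal form shows a determinant-$\pm3$ basis has a one-dimensional $\F_3$-relation space; the $3\times3$-minor argument shows the relation vector $\lambda$ has full support; and the integer-lift argument ($v_a+v_b\equiv v_c+v_d\pmod 3$ with $\{0,1\}$ entries forces equality over $\Z$, hence $\F_2$-dependence) excludes the only sign pattern with $\sum_i\lambda_i\equiv 0\pmod 3$, so the constant row kills every surviving relation. Your route buys self-containedness --- no classification of extremal $\{0,1\}$-matrices is needed, and the verification is a genuine proof rather than an ``easy to verify'' --- at the cost of a slightly longer argument; the paper's route is shorter on the page but outsources the decisive step to the cited classification.
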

\begin{proof}
From Theorem~\ref{nonexistence} the smallest $r$ so that there is a nondegenerate map from $X(\F_2^4)$ into $X(\F_3^r)$ is $\geq 5$.
Now we define a nondegenerate map from $X(\F_2^4)$ into $X(\F_3^5)$. Define $f \colon X(\F_2^4) \rightarrow  X(\F_3^5)$ by setting $f(v)=v$ if $I(v)\neq 3$ and $f(v)=v+e_5$ if $I(v)= 3$, where $v$ in the image assumes the vector in $X(\F_3^5)$ with the same coordinate and the fifth coordinate equal to zero. Hadamard's matrices having determinant equal to $\pm 3$ come from permutations of four vectors $v$ such that $I(v)=3$  or permutations of the vectors of the following type $e_a+e_b+e_c, e_d+e_a, e_c+e_a, e_b+e_a$ such that $\{a, b, c, d\}=[4]$, see \cite{ZIVKOVIC2006310}.
It is easy to verify that $f$ is nondegenerate. Therefore, \[s_3(X(\F_2^4))=15-5=10.\]

The identity map on $X(\F_2^4)$ gives us that $s_2(X(\F_2^4))=15-4=11.$

For $p\geq 5$, we have a map $f \colon X(\F_2^4) \rightarrow  X(\F_p^4)$ induced by mapping a vector in $\F_2^4$ to the vector with the same coordinates in $\F_p^4$. The absolute determinant of $4\times 4$ matrix with 0 and 1 entries is at most three, so it is less than $p$, and the map $f$ is nondegenerate. Therefore, $s_p(X(\F_2^4))=15-4=11.$
\end{proof}

Although it looks like $s_p(Y)\leq s_q(Y)$, for $p<q$, it is not the case.
We may say that in general, mod $p$ Buchstaber invariants for various primes $p$ are not comparable, as we just proved that
\[
s_2(X(\F_2^4)) > s_3(X(\F_2^4))
\]
and
\[
s_3(X(\F_2^4)) < s_5 (X(\F_2^4)).
\]

Here are a few more examples of calculations of mod $p$ Buchstaber invariant.

We start with a trivial example.
\begin{example}
For every prime $p$ and every $n$ we have
    \[ s_p(X(\F_p^n))= p^n-1-n.\]
\end{example}
\begin{proof}
    The identity map on $X(\F_p^n)$ is a nondegenerate map, so the smallest $r$ equals $n$, by dimensional reason. We get the result since $X(\F_p^n)$ has $p^n-1$ vertices.
\end{proof}

\begin{example} We have
     \[s_2(X(\F_3^3))=21.\]
\end{example}

\begin{proof} First we show that there is a non-degenerate map from $X(\F_3^3)$ into $X(\F_2^5)$. Observe that 26 vertices of $X(\F_3^3)$ can be grouped into thirteen couples $\{v, 2v\}$. The pairing induces a non-degenerate map from $X(\F_3^3)$ onto $\Delta^{12}_{(2)}$ coming from a bijection among the pairs and the vertices of  $\Delta^{12}$. Indeed, it is easy to see that $\Delta^{12}_{(2)}$ can be embedded into $X(\F_2^5)$ by mapping the vertices into $e_1, e_2, \dots, e_5$ and eight vertices of $\F_2^5$ having three nonzero coordinates. Since we have a nondegenerate map from $X(\F_3^3)$ into $\Delta^{12}_{(2)}$,  we obtain the wanted map.

We are going to show that there does not exist a nondegenerate simplicial map $f: X(\F_3^3)\rightarrow X(\F_2^4)$. Assume, on the contrary, that there is a nondegenerate map $f$. If there are two vectors $v_1, v_2\in X(\F_3^3) $ such that $f(v_1)=f(v_2)$   than $v_1=2 v_2$, so $\im f$ has at least thirteen vertices. Since $X(\F_2^4)$ has fifteen, without loss of generality we can assume that $f(e_1)=e_1, f(e_2)=e_2$,  $f(e_3)=e_3$ and $f(e_1+e_2+e_3)=e_1+e_2+e_3$ and that $\im f$ is a surjection or exhibits $e_4$ or $e_1+e_2+e_3+e_4$ or the both. It means that there exist $v_1$, $v_2$ and $v_3\in X(\F_3^3)$ such that $f(v_1)=e_1+e_2$, $f(v_2)=e_2+e_3$ and $f(v_3)=e_3+e_1$. Since $f$ is nondegenerate, $\{e_1, e_2, v_1\}$ is not a simplex of $X (\F_3^3)$ so $v_1=e_1\pm e_2$. Analogously,  $v_2=e_2\pm e_3$ and  $v_3=e_1\pm e_2$. Since $\{e_1+ e_2, e_2+ e_3, e_3+e_1$ is not a simplex of $X(\F_2^4)$  then $\{v_1, v_2, v_3\}$ is not a simplex of $X(\F_3^3)$. Then it means that at least one of $v_1$, $v_2$ and $v_3$ has $-1$, $0$ and $1$ as its coordinates. Assume that $v_1=e_1-e_2$. Then $\{v_1, e_1+e_2+e_3,e_3\}$ is a 2-simplex of $X(\F_3^3)$, but $\{e_1+e_2, e_1+e_2+e_3,e_3\}$ is not a 2-simplex in $X(\F_2^4)$ so $f$ cannot be nondegenerate.
\end{proof}

\begin{example}

We have the following inequality:
\[
    s_2(X(\F_3^4))\leq 73<76 = s_3(X(\F_3^4)).
    \]
\end{example}
\begin{proof}
    In $X(\F_3^4)$, every vertex $v$ is connected with all other vertices except with $2v$, and so it has 78 neighbours. In $X(\F_2^6)$, every vertex is connected with all other vertices, and so it has 62 neighbors.
    Therefore there are no nondegenerate map from $X(\F_3^4)$ into $X(\F_2^6)$ and we may conclude that  $s_2(X(\F_3^4))\leq 80-7=73.$
\end{proof}

\begin{prop} Let $K$ be a simplicial complex on $m$ vertices. Then $s_2 (K)\leq s_p (K)$ for all primes $$p>\frac{(m-s_2 (K)+1)^{\frac{m-s_2 (K)+1}{2}}}{2^{m-s_2 (K)}}.$$
\end{prop}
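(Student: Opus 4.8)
The plan is to unwind both invariants through the characterization used throughout the paper: writing $r=m-s_2(K)$, the desired inequality $s_2(K)\le s_p(K)$ is equivalent to the statement that whenever a nondegenerate simplicial map $f\colon K\to X(\F_2^r)$ exists, a nondegenerate simplicial map $g\colon K\to X(\F_p^r)$ exists as well (by Proposition~\ref{bin} and the definition of $s_p$). So I would start from a nondegenerate map $f\colon K\to X(\F_2^r)$ with $r$ minimal and observe that every image $f(v)$ is a column vector with entries in $\{0,1\}$. Lifting these entries to the integers $\{0,1\}\subset\Z$ and then reducing modulo $p$ defines a candidate map $g\colon K\to X(\F_p^r)$ on vertices, sending each $v$ to the same $0$--$1$ vector now read in $\F_p^r$. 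It then remains only to verify that $g$ is nondegenerate.

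Fix a simplex $\sigma=\{v_0,\ldots,v_j\}$ of $K$ and let $M$ be the $r\times(j+1)$ matrix whose columns are the vectors $f(v_i)$, viewed with $0$--$1$ integer entries. Since $f$ is nondegenerate, $\rank M=j+1$ over $\F_2$ by Proposition~\ref{GCD}, so some choice of $j+1$ rows yields a $(j+1)\times(j+1)$ submatrix $M'$ with $\det M'\equiv 1\pmod 2$. Reading $M'$ as a $0$--$1$ matrix over $\Z$, its integer determinant $d=\det M'$ is therefore odd, and in particular $d\ne 0$. The crucial quantitative input is the classical maximal-determinant bound for $0$--$1$ matrices: an $N\times N$ matrix with entries in $\{0,1\}$ satisfies $|\det|\le (N+1)^{(N+1)/2}/2^N$, which follows from Hadamard's inequality applied to the associated $\pm1$ matrix of order $N+1$ after normalizing a row and a column to ones. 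Since $j+1\le r$ and this bound is increasing in $N$ (as is readily checked), we obtain $|d|\le (r+1)^{(r+1)/2}/2^r$.

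Now the hypothesis on $p$ closes the argument: it states exactly that $p>(r+1)^{(r+1)/2}/2^r\ge |d|$, so the odd integer $d$ satisfies $0<|d|<p$ and hence $d\not\equiv 0\pmod p$. Thus the same submatrix $M'$ is invertible over $\F_p$, forcing $\rank M=j+1$ over $\F_p$ as well, so $\{g(v_0),\ldots,g(v_j)\}$ is a $j$-simplex of $X(\F_p^r)$ by Proposition~\ref{GCD}. As $\sigma$ was arbitrary, $g$ is a nondegenerate simplicial map into $X(\F_p^r)$, whence the minimal target dimension over $\F_p$ is at most $r$, giving $s_p(K)\ge m-r=s_2(K)$.

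I expect the main obstacle to be pinning down the correct maximal-determinant estimate and its monotonicity in $N$, since the entire argument hinges on matching the Hadamard bound $(r+1)^{(r+1)/2}/2^r$ precisely to the threshold in the statement. The passage from $0$--$1$ matrices to $\pm1$ matrices of one larger order must be carried out carefully (the row and column normalization, followed by the $(-2)$-scaling that produces the factor $2^{-N}$); once that is in place, the oddness of $d$ coming from $\F_2$-nondegeneracy is exactly what guarantees $d\neq 0$, and the remainder is a clean size comparison.
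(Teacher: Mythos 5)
Your argument is correct and follows essentially the same route as the paper's: reduce to the minimal nondegenerate map $f\colon K\to X(\F_2^r)$ with $r=m-s_2(K)$, reread the $0$--$1$ vectors modulo $p$, and use the Hadamard maximal-determinant bound $(r+1)^{(r+1)/2}/2^r$ for $(0,1)$-matrices to conclude that the relevant odd integer determinants cannot vanish mod $p$. The only cosmetic difference is that the paper extends each simplex to a full basis and bounds one $r\times r$ determinant, whereas you take a maximal-rank square submatrix and invoke monotonicity of the bound in the matrix size; both are fine.
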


\begin{proof} Let us denote by $r=m-s_2 (K)$ and let $f\colon K\rightarrow X (\F_2^r)$ be a simplicial nondegenerate map. Let $f_p (v)\in\F_p^r$ for $v\in K$  be the same as $f$  understood with $\mod p$ coordinates. By Hadamard's Maximum Determinant Problem, the maximal determinant of a $r\times r$ $(0, 1)$-matrix is not greater than $\frac{(r+1)^\frac{r+1}{2}}{2^r}$. For $p$ greater than this number, $f_p$ is a nondegenerate map into $X(\F_p^r)$. It follows from the fact that any simplex of $X (\F_2^r)$ is part of a base, which is also a base of $\F_p^r$ since its determinant is a nonzero number smaller than $p$.
\end{proof}

\begin{prop} If $\dim K\leq 2$ then $s_2 (K)\leq s_p (K)$ for any prime $p$.
\end{prop}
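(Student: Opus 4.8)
The plan is to show that \emph{any} nondegenerate map into a real universal complex survives reduction mod $p$ without increasing the dimension. Write $r=m-s_2(K)$ for the least integer admitting a nondegenerate simplicial map $f\colon K\to X(\F_2^r)$. It then suffices to produce a nondegenerate map $K\to\mathcal{X}^r=X(\F_p^r)$ in the same dimension $r$: this forces the least admissible dimension for $p$ to be at most $r$, whence $s_p(K)=m-r_p\ge m-r=s_2(K)$. Concretely, I would realize each value $f(v)\in\F_2^r$ by its unique $(0,1)$-representative in $\Z^r$, and let $f_p\colon K\to\mathcal{X}^r$ send $v$ to the reduction mod $p$ of this $(0,1)$-vector; a nonzero $(0,1)$-vector stays nonzero mod $p$, so $f_p$ does land in the vertex set of $\mathcal{X}^r$.

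Next I would verify that $f_p$ is nondegenerate using the rank criterion of Proposition~\ref{GCD}. Let $\sigma=\{v_0,\dots,v_j\}$ be a simplex of $K$; since $\dim K\le 2$ we have $j\le 2$. As $f$ is nondegenerate, the $r\times(j+1)$ $(0,1)$-matrix with columns $f(v_0),\dots,f(v_j)$ has rank $j+1$ over $\F_2$, so it contains a $(j+1)\times(j+1)$ submatrix $M$ with $\det M$ odd. The key observation is that an $n\times n$ $(0,1)$-matrix with $n\le 3$ has determinant of absolute value at most $2$: indeed the Hadamard bound $\tfrac{(n+1)^{(n+1)/2}}{2^n}$ used in the preceding proposition is strictly below $3$ for $n\le 3$. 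An odd integer of absolute value at most $2$ must equal $\pm1$, so $\det M=\pm1$, which is nonzero modulo every prime $p$. Hence the same columns have rank $j+1$ over $\F_p$, and by Proposition~\ref{GCD} the set $\{f_p(v_0),\dots,f_p(v_j)\}$ is a $j$-simplex of $\mathcal{X}^r$; in particular these vertices are distinct, so $f_p$ is a well-defined nondegenerate simplicial map.

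This produces a nondegenerate map $K\to\mathcal{X}^r$ and completes the argument. The only real content is the determinant estimate in the middle step, and it is precisely here that the hypothesis $\dim K\le 2$ is indispensable: for $4\times4$ $(0,1)$-matrices the maximal determinant is $3$, which is odd yet vanishes modulo $3$, so an odd minor need no longer be invertible mod $p$. This is exactly the phenomenon behind Theorem~\ref{nonexistence} (the absence of a nondegenerate map $X(\F_2^4)\to X(\F_3^4)$), and it is what I would flag as the main obstacle preventing the statement from extending to higher-dimensional $K$.
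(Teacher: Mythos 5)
Your proof is correct, and it takes a genuinely different route from the paper's. The paper verifies nondegeneracy of the mod~$p$ reduction $f_p$ by a direct coordinate case analysis: assuming a dependence $f_p(v_3)=\lambda_1 f_p(v_1)+\lambda_2 f_p(v_2)$ among images of a $2$-simplex, it pins down $\lambda_1,\lambda_2$ from the $(0,1)$-structure of the coordinates and derives a dependence of $f(v_1),f(v_2),f(v_3)$ over $\F_2$, contradicting nondegeneracy of $f$ (the $1$-dimensional case is delegated to a citation). You instead pass through the rank criterion of Proposition~\ref{GCD} and the integer minors: a $(j+1)\times(j+1)$ $(0,1)$-minor that is odd must, for $j\le 2$, equal $\pm 1$ because the maximal determinant of a $(0,1)$-matrix of size at most $3$ is $2$; hence it is a unit modulo every prime. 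This buys a uniform treatment of all simplex dimensions $j\le 2$ (no separate appeal for $\dim K=1$), makes the role of the hypothesis $\dim K\le 2$ completely transparent (the bound jumps to $3$ for $4\times 4$ matrices, which is exactly the obstruction realized in Theorem~\ref{nonexistence}), and aligns the argument with the Hadamard-determinant proposition immediately preceding this one. The paper's computation is more elementary but more ad hoc; your version is shorter and explains why the statement stops at dimension $2$. One small point worth stating explicitly: the distinctness of the $f_p(v_i)$, needed for $f_p$ to be nondegenerate as a simplicial map, follows from the linear independence you establish, as you note at the end.
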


\begin{proof} We prove that if there is a nondegenerate simplicial map $f\colon K \rightarrow X(\F_2^m)$ then there is a non degenerate simplicial map $f_p\colon K \rightarrow X(\F_p^m)$.

Take $f_p$ to be defined as $f$, assuming that the coordinates are in $\F_p$. If $\dim K=1$, the claim was already proved in \cite{Baralic2023}. Suppose $\dim K=2$, we have to show that for any  2-simplex $\{v_1, v_2, v_3\}\in K$, the vectors $ f_p (v_1), f_p (v_2)$ and $f_p(v_3)$ are linearly independent over $\F_p$. Then exist $\lambda_1, \lambda_2 \in \F_p\setminus \{0\}$ such that $f_p (v_3)=\lambda_1 f_p (v_1)+\lambda_2 f_p (v_2)$.
Since  $ f_p (v_1)\neq f_p (v_2)$ there exists $i$ such that  ${f_p (v_1)}_i\neq {f_p (v_2)}_i$. We can assume that ${f_p (v_1)}_i=1,{f_p (v_2)}_i=0$ so $\lambda_1=1$. If there is $j$ such that ${f_p (v_1)}_j=0,{f_p (v_2)}_j=1$ then $\lambda_2=1$ and since $f_p(v_3)$ has the coordinates 0 or 1, we obtain $f(v_1)+f(v_2)=f(v_3)$. Thus, for any $j$ such that ${f_p (v_2)}_j=1$  we have ${f_p (v_1)}_j=1$   so $\lambda_2=p-1$. Therefore, $f_p (v_2)= f_p (v_1)+ f_p (v_3)$. But, since the coordinates are only zero or one, we obtain $f (v_2)= f (v_1)+ f (v_3)$. But, the last equality again contradicts the nondegeneracy of $f$.
\end{proof}

\begin{center}\textmd{\textbf{Acknowledgements} }
\end{center}

\medskip
All three authors were partially supported by the bilateral project `Discrete Morse theory and its applications' funded by the Ministry of Education and Science of the Republic of Serbia and the Ministry of Education, Science and Sport of the Republic of Slovenia as a part of bilateral cooperation between two countries (2020-2021). The second author was supported by the Slovenian Research and Innovation Agency program P1-0292 and the grant J1-4031.

\pagebreak
\begin{center}\textmd{\textbf{Data availability} }
\end{center}
\medskip
Data sharing is not applicable to this article as no datasets were generated or analyzed during the current study.

\begin{center}\textmd{\textbf{Code availability} }
\end{center}

\medskip
Not applicable.

\begin{center}\textmd{\textbf{Conflict of interest} }
\end{center}

\medskip
The authors have no relevant financial or non-financial interests to disclose.

\bibliographystyle{amsplain}
\bibliography{mybibliography}

\end{document}